\declaretheorem[style=plain,numberwithin=section,name=Theorem]{theorem}
\declaretheorem[style=plain,numberlike=theorem,name=Lemma]{lemma}
\declaretheorem[style=definition,numberlike=theorem,name=Definition]{definition}
\declaretheorem[style=remark,numberlike=theorem,name=Remark]{remark}
\declaretheorem[style=remark,numberlike=theorem,name=Example]{example}
\numberwithin{equation}{section}
\newcommand{\N}{\mathbb{N}}                         
\newcommand{\Z}{\mathbb{Z}}                         
\newcommand{\R}{\mathbb{R}}                         
\newcommand{\Ell}{\mathcal{L}}                      
\newcommand{\M}{\mathcal{M}}                        
\newcommand{\bnd}{\mathrm{b}}                       
\newcommand{\fin}{\mathrm{fin}}                     
\newcommand{\Ho}{\mathrm{H}\mathstrut}                        
\newcommand{\Hb}{\mathrm{H}\mathstrut_\mathrm{b}}             
\newcommand{\Hbr}{\overline{\mathrm{H}}\mathstrut_\bnd}       
\newcommand{\EHb}{\mathrm{EH}\mathstrut_\bnd}                 
\newcommand{\EHbr}{\overline{\mathrm{EH}}\mathstrut_\bnd}     
\newcommand{\Hl}{\mathrm{H}\mathstrut^{\ell^1}}               
\newcommand{\Hlr}{\overline{\mathrm{H}}\mathstrut^{\ell^1}}   
\newcommand{\Homeo}{\mathrm{Homeo}}                 
\newcommand{\Diff}{\mathrm{Diff}}                 
\newcommand{\disk}{\mathbb{D}^2}                    
\newcommand{\vol}{\mathrm{vol}}                     
\newcommand{\frgrp}{\mathrm{F}}                     
\newcommand{\ind}{\mathbf{ind}}                     
\newcommand{\id}{\mathit{id}}                       
\newcommand{\tr}{\mathbf{tr}}                       
\newcommand{\ev}{\mathit{ev}}                       
\newcommand{\one}{\mathbf{1}}                       
\DeclareMathOperator{\Ima}{Im}
\DeclareMathOperator{\Ker}{Ker}
\author{Martin Nitsche}
\address{Martin Nitsche, TU Dresden, Germany}
\email{martin.nitsche@tu-dresden.de}
\title{Higher-degree bounded cohomology of transformation groups}
\begin{document}

\onehalfspace
\setlength{\parskip}{0pt plus 1pt} 
\raggedbottom

\begin{abstract}
For $M$ a compact Riemannian manifold Brandenbursky and Marcinkowski constructed a transfer map $\Hb^*(\pi_1(M))\to\Hb^*(\Homeo_{\vol,0}(M))$ and used it to show that for certain $M$ the space $\EHbr^3(\Homeo_{\vol,0}(M))$ is infinite-dimensional. Kimura adapted the argument to $\Diff_\vol(\disk,\partial\disk)$.
We extend both results to the higher degrees $\EHbr^{2n}$, $n\geq 1$.
We also show that for certain $M$ the ordinary cohomology $\Ho^*(\Homeo_{\vol,0}(M))$ is non-trivial in all degrees.
In our computations we view the transfer map as being induced by a coupling of groups.
\end{abstract}

\maketitle
\thispagestyle{empty} 

\section{Introduction}

Let $M$ be a closed Riemannian manifold and denote by $\Homeo_{\vol,0}(M)$ those volume-preserving homeomorphisms on $M$ that are isotopic to the identity. Let $\pi_1(M)$ denote the fundamental group of $M$ and $\pi_M\vcentcolon=\pi_1(M)/Z(\pi_1(M))$ the quotient by its center. All these groups are given the discrete topology.
Building upon a construction of Gambaudo and Ghys \cite{gambaudo-ghys}, Brandenbursky and Mar\-cin\-kow\-ski \cite{brandenbursky-marcinkowski} constructed transfer maps
\begin{align*}
\Ho^*(\pi_M)&\xrightarrow{\mathmakebox[1cm]{T}}\Ho^*(\Homeo_{\vol,0}(M))\quad\text{and}\\
\Hb^*(\pi_1(M))\cong\Hb^*(\pi_M)&\xrightarrow{\mathmakebox[1cm]{T_\mathrm{b}}}\Hb^*(\Homeo_{\vol,0}(M))&
\end{align*}
on the level of ordinary and bounded cohomology.
With the help of these maps they showed that for certain fundamental groups $\pi_1(M)$ the exact reduced bounded cohomology $\EHbr^*(\Homeo_{\vol,0}(M))$ contains $\EHbr^*(\frgrp_2)$ as a subspace and, in particular, is infinite-dimensional in degree $3$.
The transfer maps $T, T_\mathrm{b}$ are induced by a certain measurable cocycle which arises from a section of the evaluation map $\ev_{x_0}\colon\Homeo_{\vol,0}(M)\to M$. It was left open whether they depend on the choice of the section.

Subsequently, Kimura \cite{kimura} used a variation of Brandenbursky and Marcinkowski's construction to obtain, among other results, transfer maps
$\Ho^*(P_m)\to\Ho^*(\Diff_\vol(\disk,\partial\disk))$ and $\Hb^*(P_m)\to\Hb^*(\Diff_\vol(\disk,\partial\disk))$,
where $P_m$ is the group of pure braids on $m$ strands. He used them to show that also $\EHbr^3(\Diff_\vol(\disk,\partial\disk))$ is infinite-dimensional.

In this article we extend the results of both Brandenbursky--Marcinkowski and Kimura to higher degrees. We approach both cases in a unified way where we view the transfer maps as being induced not by a measurable cocycle but by a coupling between groups, i.e., a measure space that carries commuting actions of the two involved groups.
Although there is a correspondence between measurable cocycles and couplings, this change in language makes the construction easier.
In particular, in both applications the coupling that induces the transfer arises naturally and does not depend on a choice.
Moreover, it becomes easier to separate that part of the construction that is specific to the particular transformation group from a more general picture where couplings between any (discrete) groups can be treated like generalized group homomorphisms and give rise to transfer maps in (bounded) cohomology.

This general picture has already been studied by various authors, although not for the setting that we require here, with a non-proper action of the uncountable discrete group $\Homeo_{\vol,0}(M)$:
Monod and Shalom \cite{monod-shalom} defined the more general induction homomorphism induced by a coupling and gave a concatenation formula, but considered only the special case where the coupling is a measure equivalence.
Savini \cite{savini} considered the homomorphisms induced by measurable cocycles in the context of lattices in Lie groups, and showed that they only depend on the cohomology class of the cocycle.
Shalom \cite{shalom} introduced the notion of a topological coupling for proper group actions and constructed the induced homomorphisms on ordinary cohomology.

In the framework of transfers induced by couplings we compute:
\newtheorem*{restate:M-bounded}{Theorem~\ref{thm:M-bounded}}
\begin{restate:M-bounded}
Let $\Homeo_{\vol,0}(M)$ denote the group of volume-preserving, isotopic-to-identity homeomorphisms on a compact Riemannian manifold $M$ of dimension $\geq 3$.
If $\pi_1(M)$ surjects onto the free group $\frgrp_2$, then $\dim\EHbr^d(\Homeo_{\vol,0}(M))=\infty$ for $d=3$ and for $d\geq 2$ even.
\end{restate:M-bounded}

\newtheorem*{restate:disk-bounded}{Theorem~\ref{thm:disk-bounded}}
\begin{restate:disk-bounded}
Let $\Diff_\vol(\disk,\partial\disk)$ denote the group of volume-preserving (smooth) diffeomorphisms of the standard $2$--disk that restrict to the identity in a neighborhood of the boundary.
Then $\dim\EHbr^d(\Diff_\vol(\disk,\partial\disk))=\infty$ for $d=3$ and for $d\geq 2$ even.
\end{restate:disk-bounded}

In \cite{loeh} Löh gave examples of groups with infinite bounded-cohomological dimension, constructed as infinite direct sums of certain base groups. With the groups $\Homeo_{\vol,0}(M)$ and $\Diff_\vol(\disk,\partial\disk)$ we now observe the same phenomenon in certain ``large'' transformation groups.

Using the same construction we also obtain the following result in ordinary cohomology:

\newtheorem*{restate:M-ordinary}{Theorem~\ref{thm:M-ordinary}}
\begin{restate:M-ordinary}
Let $M$ be a compact Riemannian manifold of dimension $\geq 5$ and assume that there exists a split epimorphism $r\colon\pi_1(M)\to\Z^2$ that is trivial on the center $Z(\pi_1(M))$.
Then $\Ho^d(\Homeo_{\vol,0}(M))\neq \{0\}$ for all $d\geq 0$.
\end{restate:M-ordinary}

\section{Preliminaries in bounded cohomology}\label{sec:preliminaries}
We quickly recall some definitions and facts about bounded cohomology.
The bar resolution $C(\Gamma;\R)$ of a discrete group $\Gamma$ is a chain complex of $\Gamma$--modules where the $n$\nobreakdash--th chain-module is the real Hilbert space with orthonormal basis $\Gamma^{n+1}$ and with the left $\Gamma$--action defined on this basis by $\gamma.(\gamma_0,\dots,\gamma_n)=(\gamma\gamma_0,\dots,\gamma\gamma_n)$.
(Some authors, like \cite{loeh}, use a slightly different notation.)
The differentials $\partial_n\colon C_n(\Gamma;\R)\to C_{n-1}(\Gamma;\R)$ are given by
\begin{align*}
\partial_n(\gamma_0,\dots,\gamma_n)=\sum_{i=0}^n (-1)^i\cdot (\gamma_0,\dots,\gamma_{i-1},\gamma_{i+1},\dots,\gamma_n).
\end{align*}

When $E$ is a coefficient module, i.e., a Banach space with an isometric $\Gamma$--action, the Banach bar complex with coefficients in $E$ is the $\ell^1$--closure of $C(\Gamma;\R)\otimes_\Gamma E$. The
$\ell^1$\nobreakdash--homology and the reduced $\ell^1$--homology of $\Gamma$ are defined as the (reduced) homology of the Banach bar chain complex:
\[
\Hl_n(\Gamma;E)\vcentcolon=\frac{\Ker\partial_n^{\ell^1}}{\Ima\partial_{n+1}^{\ell^1}},\qquad\qquad
\Hlr_n(\Gamma;E)\vcentcolon=\frac{\Ker\partial_n^{\ell^1}}{\overline{\Ima}\partial_{n+1}^{\ell^1}}
\]
Similarly, the bounded cohomology is defined by taking the topological dual of the Banach bar complex: The $n$--th module of the Banach bar cochain complex consists of the $\Gamma$\nobreakdash--invariant bounded linear functions from $C_n(\Gamma;\R)$ to $E$, identified with $\ell^\infty(\Gamma^{n+1};E)^\Gamma$, and the codifferential $\partial^n_\bnd$ is the dual of $\partial_n^{\ell^1}$.
The bounded cohomology and the reduced bounded cohomology of $\Gamma$ are defined as
\[
\Hb^n(\Gamma;E)\vcentcolon=\frac{\Ker\partial_\bnd^{n+1}}{\Ima\partial_\bnd^n},\qquad\qquad \Hbr^n(\Gamma;E)\vcentcolon=\frac{\Ker\partial_\bnd^{n+1}}{\overline{\Ima}\partial_\bnd^n}.\]
There is an inclusion map from the Banach bar cochain complex into the bar cochain complex used to define ordinary cohomology. The kernel of the induced comparison map $\mathit{cmp}\colon\Hb^*(\Gamma;E)\to\Ho^*(\Gamma;E)$ is denoted $\EHb^*(\Gamma;E)$, the exact bounded cohomology of $\Gamma$, and the image of $\EHb^*(\Gamma;E)$ under the map $\Hb^*(\Gamma;E)\to\Hbr^*(\Gamma;E)$ is denoted $\EHbr^*(\Gamma;E)$, the exact reduced bounded cohomology.

In this article we are mostly concerned with the case where $E=\R$ with the trivial group action, where we write $\Hb^*(\Gamma)\vcentcolon=\Hb^*(\Gamma;\R)$ and $\Hl_*(\Gamma)\vcentcolon=\Hl_*(\Gamma;\R)$. For this case there exists a Kronecker pairing $\langle\cdot,\cdot\rangle\colon\Hbr^n(\Gamma)\otimes\Hlr_n(\Gamma)\to\R$, just like in ordinary (co)homology. Furthermore, there is a cross product, defined on the level of cochains by
\[
(c_1\times c_2)(\gamma_0,\dots,\gamma_{p+q})=c_1(\gamma_0,\dots,\gamma_p)\cdot c_2(\gamma_p,\dots,\gamma_{p+q})
,\]
just as in ordinary cohomology, and postcomposition with the map induced by the diagonal map $\Delta\colon\Gamma\to\Gamma^2$ gives a cup product $\xi_1\cup\xi_2\vcentcolon=\Delta^*(\xi_1\times\xi_2)$.
The products are natural and compatible with the Kronecker pairing. There is, however, no K\"unneth formula.
Finally, we will use the fact that, as for ordinary cohomology, conjugation with any group element induces the identity homomorphism. Details about all of this can be found in \cite{loeh}. The proof that the cup product is graded-commutative works exactly as for ordinary cohomology \cite{hatcher}*{Theorem 3.11}.

\section{The transfer induced by a left-cofinite coupling}

We wish to write down the transfer as the composition of a generalization of Monod and Shalom's induction homomorphism with integration over the coefficient module.

\begin{definition}
By a coupling between two discrete groups $\Gamma$ and $\Lambda$ we mean a measure space $X$ together with a measure-preserving left action $\sigma$ of $\Gamma$ and a commuting measure-preserving right action $\rho$ of $\Lambda$.
We denote such a coupling either by the quintuple $(\Gamma,\sigma,X,\rho,\Lambda)$ or, if the group actions are obvious or unimportant, by the triple $(\Gamma,X,\Lambda)$.

We say that a coupling $(\Gamma,X,\Lambda)$ is \emph{left-cofinite} if $\Gamma$ is countable, the $\Gamma$--action is free, and there exists a subset $F\subset X$ of finite measure that contains exactly one point of every $\Gamma$--orbit. We call $F$ a $\Gamma$--\emph{fundamental domain}.

We say that a left-cofinite coupling $(\Gamma,X,\Lambda)$ is \emph{geometrical} if $X$ comes with the additional structure of a collection of \emph{bounded subsets} -- closed under taking subsets and finite unions -- such that
both group actions preserve boundedness, for every $B\subset X$ bounded the set $\{\gamma\in\Gamma\mid\gamma B\cap B\neq\emptyset\}$ is finite, and the fundamental domain can be chosen to be bounded.
\end{definition}

\begin{example}\label{exm:main-example}
Let $M$ be a compact topological manifold that is also a finite measure space. The fundamental group $\Gamma\vcentcolon=\pi_1(M)$ acts freely and properly on the universal covering $X\vcentcolon=\widetilde{M}$.
Let the bounded subsets of $X$ be exactly the precompact subsets.
If the group $\Lambda$ acts on $X$ by measure-preserving continuous maps, then $(\Gamma,X,\Lambda)$ is a geometrical left-cofinite coupling. Note that $\Lambda$ may be uncountable or act non-properly.
\end{example}

When $(\Gamma,\sigma,X,\rho,\Lambda)$ is a left-cofinite coupling, a choice of a fundamental domain $F\subset X$ gives rise to a $\Gamma$--equivariant isomorphism of measure spaces $X\cong\Gamma\times F$, which preserves boundedness if the coupling is geometrical.
From this isomorphism one obtains a retract $\chi\colon X\to\Gamma$ that sends $\gamma.y$ to $\gamma$ for all $y\in F$, $\gamma\in\Gamma$, and hence satisfies $\chi(\gamma.x)=\gamma\cdot\chi(x)$.
(One also gets an action $\Lambda\curvearrowright F$ and a map $\chi\circ\rho\colon\Lambda\times F\to\Gamma$. Together they form the \emph{associated measurable cocycle}, which contains the same data as the coupling plus the choice of $F$.)

For a given left-cofinite coupling $(\Gamma,X,\Lambda)$ and a $\Gamma$--coefficient module $E$ let $\Ell^\infty(X,E)$ denote the Lebesgue space of essentially bounded functions on $X$. It carries isometric $\Gamma$-- and $\Lambda$--actions given by
$(\gamma.f)(x)\vcentcolon=\gamma.f(\gamma^{-1}.x)$ and $(\lambda.f)(x)\vcentcolon=f(x.\lambda)$.
Recall that in the definition of the Banach bar cochain complex the group action on $\ell^\infty(\Gamma^{n+1};E)$ is given by
$(\gamma.c)(\gamma_0,\dots,\gamma_n)\vcentcolon=\gamma.c(\gamma^{-1}\gamma_0,\dots,\gamma^{-1}\gamma_n)$.
In parallel to \cite{monod-shalom}*{Proposition 4.5}, we define for every $n\in\N$ a map
\begin{equation}\label{eqn:monod-shalom-map}
\begin{gathered}
\chi^*\colon\ell^\infty(\Gamma^{n+1};E)^\Gamma\to\ell^\infty\big(\Lambda^{n+1};\Ell^\infty(X,E)^\Gamma\big)^\Lambda\\
\chi^*c(\lambda_0,\dots,\lambda_n)(x)\vcentcolon=c\big(\chi(x.\lambda_0),\dots,\chi(x.\lambda_n)\big).
\end{gathered}
\end{equation}
It is easy to check that $\chi^*$ is a well-defined bounded homomorphism of cochain complexes and hence induces an induction homomorphism
on bounded cohomology and on reduced bounded cohomology:
\[\ind_\Gamma^\Lambda X\colon\qquad\Hb^*(\Gamma;E)\to\Hb^*(\Lambda;\Ell^\infty(X,E)^\Gamma),\qquad\Hbr^*(\Gamma;E)\to\Hbr^*(\Lambda;\Ell^\infty(X,E)^\Gamma)
\]
If the coupling is geometrical, then $\chi$ has a finite image when restricted to bounded subsets. In this case we define an induction homomorphism on ordinary cohomology by the same formula, just replacing $\Ell^\infty(X,E)$ with $\M_\fin(X,E)$, the space of measurable functions with finite image on bounded sets.
Then we have $\mathit{cmp}\circ\ind_\Gamma^\Lambda X=\iota_*\circ\ind_\Gamma^\Lambda X\circ\mathit{cmp}$, where $\iota\colon\M_\fin(X,E)\to\Ell^\infty(X,E)$ denotes the inclusion, and hence the induction is also defined on exact reduced bounded cohomology:
\[\ind_\Gamma^\Lambda X\colon\qquad\Ho^*(\Gamma;E)\to\Ho^*(\Lambda;\M_\fin(X,E)^\Gamma),\qquad\EHbr^*(\Gamma;E)\to\EHbr^*(\Lambda;\Ell^\infty(X,E)^\Gamma)
\]

\begin{lemma}
The induction homomorphism does not depend on the choice of the fundamental domain.
\end{lemma}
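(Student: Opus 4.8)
The plan is to prove that the two cochain maps $\chi^*$ induced by two different $\Gamma$--fundamental domains $F$ and $F'$ are cochain homotopic, via a homotopy that is $\Lambda$--equivariant, bounded, and (in the geometrical case) respects the finiteness condition defining $\M_\fin$; this at once forces the induced maps to agree on $\Hb^*$ and on $\Hbr^*$, hence on the subspace $\EHbr^*$, and likewise on $\Ho^*$. Write $\chi,\chi'\colon X\to\Gamma$ for the retracts associated to $F$ and $F'$, so that $\chi(\gamma.x)=\gamma\cdot\chi(x)$ and $\chi'(\gamma.x)=\gamma\cdot\chi'(x)$; observe that $\chi$ and $\chi'$ differ only by the $\Gamma$--invariant measurable map $x\mapsto\chi(x)^{-1}\chi'(x)$, which is the structural reason to expect them to induce homotopic cochain maps. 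For the $\Ho^*$ and $\EHbr^*$ statements I would compare only bounded fundamental domains, since otherwise the $\M_\fin$--induction is not defined.

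The homotopy I would use is the transposed prism operator: for $c\in\ell^\infty(\Gamma^{n+1};E)^\Gamma$ and $n\ge 1$, set $H^0\vcentcolon=0$ and
\[
(H^n c)(\lambda_0,\dots,\lambda_{n-1})(x)\vcentcolon=\sum_{i=0}^{n-1}(-1)^i\,c\big(\chi'(x.\lambda_0),\dots,\chi'(x.\lambda_i),\chi(x.\lambda_i),\dots,\chi(x.\lambda_{n-1})\big).
\]
This is exactly the standard chain homotopy between the two ``vertex maps'' $\chi$ and $\chi'$ on the bar resolution, viewed on the cochain side. The checks are routine: $H^n c$ is measurable in $x$ and bounded with operator norm $\le n$; it takes values in the $\Gamma$--invariants $\Ell^\infty(X,E)^\Gamma$, which uses both equivariance relations together with the $\Gamma$--invariance of $c$; it is $\Lambda$--invariant, by the very substitution (replace $x$ by $x.\lambda$ and each $\lambda_j$ by $\lambda^{-1}\lambda_j$) that works for $\chi^*$ in \eqref{eqn:monod-shalom-map}; and, if the coupling is geometrical with $F,F'$ bounded, $H^n c$ has finite image on every bounded set $B$ because each of $\chi(B.\lambda_j)$ and $\chi'(B.\lambda_j)$ is finite, so the same formula also lands in the $\M_\fin$--cochain complex.

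It then remains to verify the homotopy identity $\partial^{n-1}_\bnd\circ H^n+H^{n+1}\circ\partial^n_\bnd=\chi^*-\chi'^*$ for all $n\ge 0$ (at $n=0$ the first term is absent). This is a telescoping computation identical to the classical proof that two augmentation-preserving chain maps out of the bar resolution are chain homotopic, with no sign subtleties. Granting it: $\chi^*$ and $\chi'^*$ agree on $\Hb^*$; since $H$ is bounded, for any bounded cocycle $\alpha$ one has $\chi^*\alpha-\chi'^*\alpha=\partial_\bnd(H\alpha)\in\Ima\partial_\bnd$, so they agree on $\Hbr^*$ and hence on $\EHbr^*$; and the identical argument with $\M_\fin$--coefficients gives agreement on $\Ho^*$, compatibly with $\mathit{cmp}$. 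I expect no conceptual obstacle here — the only real work is the bookkeeping of the second paragraph, in particular noticing that $\Gamma$--invariance of the values of $H^nc$ genuinely requires both $\chi$ and $\chi'$ to be $\Gamma$--equivariant and that the finite-image property requires both fundamental domains to be bounded.
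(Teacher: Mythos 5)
Your proposal is correct, and it reaches the conclusion by a more direct packaging of the same underlying mechanism as the paper. The paper first reduces to the universal group $\Lambda_0$ of $\Gamma$--equivariant measure-preserving (and, in the geometrical case, boundedness-preserving) automorphisms of $X$, observes that the change of fundamental domain is itself an element $g\in\Lambda_0$ with $\chi_1=\chi_2\circ g$, and then invokes the standard fact that right translation $r_g$ acts trivially on (bounded) cohomology, proved by the explicit homotopy $\Theta_{n+1}(c)(\lambda_0,\dots,\lambda_n)=\sum_i(-1)^i c(\lambda_0,\dots,\lambda_i,\lambda_i g,\dots,\lambda_n g)$. You instead write down the prism homotopy directly between $\chi^*$ and $\chi'^*$ on the induced cochain complexes; unwinding the paper's argument (substituting $\chi_2(x.\lambda_i g)=\chi_1(x.\lambda_i)$ into $\Theta\circ\chi_2^*$) produces exactly your interleaved operator $H$, so the two homotopies coincide up to sign conventions. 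What your route buys is economy: no need to introduce $\Lambda_0$, to justify the factorization of $\chi_1^*,\chi_2^*$ through it, or to verify that $g$ is measure- and boundedness-preserving; the price is that you must check $\Lambda$--invariance, $\Gamma$--invariance of the values, boundedness, and the $\M_\fin$--condition of $H$ by hand, which you correctly identify and which do go through (your restriction to bounded fundamental domains in the geometrical case is also implicitly needed in the paper, since $g$ is boundedness-preserving only when both domains are bounded). The only blemishes are cosmetic: the superscript bookkeeping in your homotopy identity does not quite match the paper's convention that $\partial^n_\bnd$ is dual to $\partial_n^{\ell^1}$, and the telescoping verification is asserted rather than carried out, but both are routine and harmless.
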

\begin{proof}
Assume that $F_1, F_2$ are two fundamental domains of $X$ with associated retracts $\chi_1, \chi_2$.
Let $\Lambda_0$ denote the group of $\Gamma$--equivariant measure-preserving (and, in the geometrical case, boundedness-preserving) automorphisms on $X$. Since $\chi_1^*$ and $\chi_2^*$ factor through the induced map of the group homomorphism $\Lambda\to\Lambda_0$ given by the action $\rho$, we may assume $\Lambda=\Lambda_0$.

The isomorphism of measure spaces $F_1\to\Gamma\times F_1\cong X\cong\Gamma\times F_2\to F_2$ gives rise to a map $g\colon X\cong\Gamma\times F_1\to\Gamma\times F_2\cong X$ such that $\chi_1=\chi_2\circ g$. But $g$ is an element of $\Lambda_0$. Hence, if $r_g\colon\Lambda_0\to\Lambda_0$ denotes right-multiplication with $g$, then $\chi_1^*=(\chi_2\circ r_g)^*=r_g^*\circ\chi_2^*$. Finally, in both ordinary and bounded cohomology $r_g$ induces the identity by the cochain homotopy
\[\Theta_{n+1}\colon C^{n+1}_{(\mathrm{b})}\to C^{n}_{(\mathrm{b})},\quad
\Theta_{n+1}(c)(\lambda_0,\dots,\lambda_{n})\vcentcolon=\sum_{i=0}^{n}(-1)^i\cdot c(\lambda_0,\dots,\lambda_i,\lambda_i g,\dots,\lambda_n g).\qedhere
\]
\end{proof}

In the case where $E=\R$ with the trivial $\Gamma$--action we now postcompose the induction with the map induced by integrating the coefficients over any $\Gamma$--fundamental domain $F$ -- using the fact that the $\Lambda$--action is measure-preserving. Thus, we recover the transfer map:
\begin{equation}\label{eqn:transfer-formula}
\begin{gathered}
\tr_\Gamma^\Lambda X\colon\qquad\Hb^*(\Gamma)\to\Hb^*(\Lambda),\qquad\Hbr^*(\Gamma)\to\Hbr^*(\Lambda)\\
[c]\mapsto\left[(\lambda_0,\dots,\lambda_n)\mapsto\int_F c\big(\chi(x.\lambda_0),\dots,\chi(x.\lambda_n)\big)\right]
\end{gathered}
\end{equation}
If the coupling is geometrical, the same formula again works for ordinary and exact reduced bounded cohomology, and we have $\mathit{cmp}\circ\tr_\Gamma^\Lambda=\tr_\Gamma^\Lambda\circ\mathit{cmp}$.

\begin{remark}
Equation \eqref{eqn:transfer-formula} is, essentially, the same formula that appears in \cite{brandenbursky-marcinkowski}, as well as in \cite{savini} and other places.
Minor differences in appearance are the result of different conventions in the definition of (bounded) cohomology. The reason why a coupling $(\Gamma,X,\Lambda)$ induces a map $\Hb^*(\Gamma)\to\Hb^*(\Lambda)$ instead of a map $\Hb^*(\Lambda)\to\Hb^*(\Gamma)$, as one might expect, is that we stick with the usual group action for the bar resolution, $\gamma.(\gamma_0,\dots,\gamma_n)=(\gamma\gamma_0,\dots,\gamma\gamma_n)$, instead of $\gamma.(\gamma_0,\dots,\gamma_n)=(\gamma_0\gamma^{-1},\dots,\gamma_n\gamma^{-1})$.
These kinds of issues can be overcome by composing the transfer with the map induced by the group involution, $\cdot^{-1}\colon\Gamma\to\Gamma^\mathrm{op}$.
\end{remark}

\begin{lemma}\label{lem:transfer-properties}
The transfer construction satisfies the following properties:
\begin{enumerate}
\item\label{itm:morphisms}
If $X=\Gamma$ with the counting measure and with $\Gamma$ acting by left multiplication, then the $\Lambda$--action in any coupling $(\Gamma,X,\Lambda)$ is given by a homomorphism $\varphi\colon\Lambda\to\Gamma$, $x.\lambda=x\cdot\varphi(\lambda)$, and we have
\[\qquad\qquad\tr_\Gamma^\Lambda X=\varphi^*.\]
\item\label{itm:sums-and-products}
If $X_\mathrm{tr}$ and $X'_\mathrm{tr}$ are finite (bounded) measure spaces with trivial $\Gamma$-- and $\Lambda$--actions, then for any (geometrical) left-cofinite couplings $(\Gamma,X,\Lambda)$, $(\Gamma,X',\Lambda)$ we have
\[\qquad\qquad\tr_\Gamma^\Lambda(X\times X_\mathrm{tr}\sqcup X'\times X'_\mathrm{tr})=\vol(X_\mathrm{tr})\cdot\tr_\Gamma^\Lambda X+\vol(X'_\mathrm{tr})\cdot\tr_\Gamma^\Lambda X'.\]
\item\label{itm:concatenation}
If $(\Gamma,X_1,\Lambda)$ and $(\Lambda,X_2,\Pi)$ are (geometrical) left-cofinite couplings, and $X_1\times_\Lambda X_2$ denotes the quotient of $X_1\times X_2$ by the $\Lambda$--action $\lambda.(x_1,x_2)=(x_1.\lambda^{-1},\lambda.x_2)$, then
$(\Gamma,X_1\times_\Lambda X_2,\Pi)$ is a (geometrical) left-cofinite coupling and \[\qquad\qquad
\tr_\Gamma^\Pi(X_1\times_\Lambda X_2)=\tr_\Lambda^\Pi X_2\circ\tr_\Gamma^\Lambda X_1.\]
Furthermore, if $(\Gamma,\sigma,X,\rho,\Lambda)$ is a left-cofinite coupling and $\varphi\colon\Pi\to\Lambda$ is a homomorphism between the possibly uncountable groups, then
\[\qquad\qquad
\tr_{\Gamma,\sigma}^{\Pi,\rho\circ\varphi} X=\varphi^*\circ\tr_{\Gamma,\sigma}^{\Lambda,\rho}X.\]
\item\label{itm:cross-product}
If $(\Gamma_1,X_1,\Lambda_1)$ and $(\Gamma_2,X_2,\Lambda_2)$ are two (geometrical) left-cofinite couplings, then $(\Gamma_1\times\Gamma_2,X_1\times X_2,\Lambda_1\times\Lambda_2)$ is a (geometrical) left-cofinite coupling and
\[\qquad\qquad
\tr_{\Gamma_1\times\Gamma_2}^{\Lambda_1\times\Lambda_2} (X_1\times X_2)(\xi_1\times\xi_2)=
\tr_{\Gamma_1}^{\Lambda_1} X_1(\xi_1)\times
\tr_{\Gamma_2}^{\Lambda_2} X_2(\xi_2).\]
\item\label{itm:limits}
If $\big((\Gamma,\sigma,X,\rho_l,\Lambda)\big)_{l\in\N}$ is a sequence of left-cofinite couplings and the $\rho_l$ converge to an action $\rho_\infty$ in the sense that $\vol\big(F\cap\bigcup_{\lambda\in\Lambda}\{\rho_l(\lambda)(x)\neq\rho_\infty(\lambda)(x)\}\big)\to 0$ for any fundamental domain $F$, then the transfer maps in reduced bounded cohomology converge in the sense that
\[\qquad\qquad\sup_{\|\xi\|=1}\big\|\tr_{\Gamma,\sigma}^{\Lambda,\rho_l}X(\xi)-\tr_{\Gamma,\sigma}^{\Lambda,\rho_\infty}X(\xi)\big\|\to 0.\]
If, in addition, all couplings are geometrical and the sequence is uniformly bounded in the sense that
\begin{equation}\tag{UB}\label{eqn:bounded-convergence}
\qquad
\text{for any fundamental domain $F$ and all }\lambda\in\Lambda\colon
\bigcup_{l\in\N}\rho_l(\lambda)(F) \text{ is bounded},
\end{equation}
then the transfer maps in ordinary cohomology converge in the sense that for all $\xi\in\Ho^n(\Gamma)$ and all $\xi'\in\Ho_n(\Lambda)$
\[\qquad\qquad\big\langle\tr_{\Gamma,\sigma}^{\Lambda,\rho_l}X(\xi),\xi'\big\rangle\to\big\langle\tr_{\Gamma,\sigma}^{\Lambda,\rho_\infty}X(\xi),\xi'\big\rangle.\]
\end{enumerate}
\end{lemma}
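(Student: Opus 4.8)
The plan is to establish all five properties by direct manipulation of the transfer cochain in formula \eqref{eqn:transfer-formula}: in each case one chooses a convenient $\Gamma$--fundamental domain, which puts the retract $\chi$ into an explicit form, and then the asserted identity of cohomology classes follows from an identity of representing cochains --- via Fubini's theorem in the items involving products of spaces, and via a convergence estimate in the last one. The accompanying claims that the various products and quotients of couplings are again (geometrical) left-cofinite couplings are routine: one exhibits the obvious fundamental domain (a product of the given ones) and, in the geometrical case, declares a subset bounded iff it is contained in, or is the image of, a product of bounded sets, and then checks the axioms.

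For part~\eqref{itm:morphisms}, take $F=\{e\}$, so that $\chi=\id_\Gamma$; since the $\Lambda$--action commutes with all left translations it is right translation by $\varphi(\lambda)\vcentcolon=e.\lambda$, and associativity of the action makes $\varphi$ a homomorphism, whereupon integration over the one-point set $F$ of measure $1$ turns \eqref{eqn:transfer-formula} into $[c]\mapsto[(\lambda_0,\dots,\lambda_n)\mapsto c(\varphi(\lambda_0),\dots,\varphi(\lambda_n))]=\varphi^*[c]$. For part~\eqref{itm:sums-and-products}, use the fundamental domain $F\times X_\mathrm{tr}\sqcup F'\times X'_\mathrm{tr}$; on its two pieces $\chi$ is, up to the projection away from the trivial factor, the retract of $X$ and of $X'$, so Fubini turns the defining integral into $\vol(X_\mathrm{tr})$ times the transfer cochain of $X$ plus $\vol(X'_\mathrm{tr})$ times that of $X'$. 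Part~\eqref{itm:cross-product} is similar: with fundamental domain $F_1\times F_2$ one has $\chi(x_1,x_2)=(\chi_1(x_1),\chi_2(x_2))$, the integrand computing the transfer of $c_1\times c_2$ separates as a product of a function of $x_1$ and a function of $x_2$, and Fubini splits the integral over $F_1\times F_2$ into the product of the two transfer cochains.

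The substance of part~\eqref{itm:concatenation} is to describe $X_1\times_\Lambda X_2$ concretely. Using the $\Lambda$--action to move the second coordinate into a fixed $\Lambda$--fundamental domain $F_2\subset X_2$ identifies $X_1\times_\Lambda X_2$ $\Gamma$--equivariantly with $X_1\times F_2$ (here one uses that $\Lambda$ acts freely on $X_2$), so $F_1\times F_2$ is a $\Gamma$--fundamental domain of finite measure $\vol(F_1)\cdot\vol(F_2)$; under this identification the associated retract is $[x_1,x_2]\mapsto\chi_1\big(x_1.\chi_2(x_2)\big)$, where $\chi_1,\chi_2$ are the retracts of $X_1,X_2$. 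Inserting this into \eqref{eqn:transfer-formula} for $X_1\times_\Lambda X_2$, recalling that $\Pi$ acts through the $X_2$--coordinate, and applying Fubini to integrate first over $F_1$ and then over $F_2$, reproduces exactly the cochain-level composition $\tr_\Lambda^\Pi X_2\circ\tr_\Gamma^\Lambda X_1$. The second displayed equality is the easier observation that passing from the $\Lambda$--action $\rho$ to the $\Pi$--action $\rho\circ\varphi$ leaves $F$ and $\chi$ unchanged and merely precomposes the arguments of $c$ in \eqref{eqn:transfer-formula} with $\varphi$.

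For part~\eqref{itm:limits}, note that $\sigma$, $X$, and hence the retract $\chi$, do not vary with $l$, and set $\varepsilon_l\vcentcolon=\vol\big(F\cap\bigcup_{\lambda\in\Lambda}\{\rho_l(\lambda)(x)\neq\rho_\infty(\lambda)(x)\}\big)$, which tends to $0$. For a bounded cochain $c$ and any tuple $(\lambda_0,\dots,\lambda_n)$, the integrands defining $\tr_{\Gamma,\sigma}^{\Lambda,\rho_l}X(c)$ and $\tr_{\Gamma,\sigma}^{\Lambda,\rho_\infty}X(c)$ at that tuple agree outside a subset of $F$ of measure $\leq\varepsilon_l$ and differ there by at most $2\|c\|_\infty$, so $\|\tr_{\Gamma,\sigma}^{\Lambda,\rho_l}X(c)-\tr_{\Gamma,\sigma}^{\Lambda,\rho_\infty}X(c)\|_\infty\leq 2\varepsilon_l\|c\|_\infty$; representing $\xi$ with $\|\xi\|=1$ by a bounded cocycle of norm arbitrarily close to $1$ and passing to the quotient seminorm on $\Hbr^n$ then gives the first convergence statement, with the bound $2\varepsilon_l$ uniform in $\xi$. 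For the ordinary-cohomology statement, represent $\xi$ by a cocycle $c$ and $\xi'$ by a \emph{finite} cycle, so that only finitely many $\lambda\in\Lambda$ occur; split each integral $\int_F c(\chi(\rho_l(\lambda_0)x),\dots,\chi(\rho_l(\lambda_n)x))\,dx$ over $F\setminus A_l$ and $A_l$, where $A_l\vcentcolon=F\cap\bigcup_{\lambda}\{\rho_l(\lambda)x\neq\rho_\infty(\lambda)x\}$. On $F\setminus A_l$ the integrand equals $c(\chi(\rho_\infty(\lambda_0)x),\dots,\chi(\rho_\infty(\lambda_n)x))$, which takes finitely many values on the bounded set $F$, so that piece converges to $\int_F c(\chi(\rho_\infty(\lambda_0)x),\dots)\,dx$ as $\vol(A_l)\to0$; over $A_l$, hypothesis \eqref{eqn:bounded-convergence} makes $\bigcup_l\rho_l(\lambda)(F)$ bounded for each of the finitely many relevant $\lambda$, so $\chi$ takes only finitely many values on it, the integrand is bounded independently of $l$, and that piece is bounded by a constant times $\vol(A_l)\to0$. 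Summing over the finite cycle yields $\big\langle\tr_{\Gamma,\sigma}^{\Lambda,\rho_l}X(\xi),\xi'\big\rangle\to\big\langle\tr_{\Gamma,\sigma}^{\Lambda,\rho_\infty}X(\xi),\xi'\big\rangle$. I expect part~\eqref{itm:limits} to be the main obstacle --- specifically, converting the uniform cochain estimate into the statement about the quotient seminorm in the reduced bounded case, and controlling the integral over $A_l$ in the ordinary case, which genuinely uses both \eqref{eqn:bounded-convergence} and the finite-image property of $\chi$ on bounded subsets; the other items are straightforward computations with \eqref{eqn:transfer-formula} and Fubini.
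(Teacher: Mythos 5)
Your proposal is correct and follows essentially the same route as the paper: direct manipulation of the transfer formula \eqref{eqn:transfer-formula} with explicit fundamental domains and retracts (in particular, for part~\ref{itm:concatenation} the same quotient measure, fundamental domain $q(F_1\times F_2)$ and retract $\chi_1(x_1.\chi_2(x_2))$), Fubini for the product cases, and for part~\ref{itm:limits} the same measure/boundedness estimate that the paper leaves largely implicit. You merely spell out in more detail the steps the paper dismisses as following ``directly from Equation~\eqref{eqn:transfer-formula}''.
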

\begin{proof}
Properties \ref{itm:morphisms}, \ref{itm:sums-and-products} and the first part of \ref{itm:limits} follow directly from Equation~\eqref{eqn:transfer-formula}. In the second part of property~\ref{itm:limits} the condition \eqref{eqn:bounded-convergence} ensures that for all tuples $(\lambda_0,\dots,\lambda_n)$ the functions that appear under the integral, $x\mapsto c\big(\chi\big(\rho_l(\lambda_0)(x)\big),\dots,\chi\big(\rho_l(\lambda_n)(x)\big)\big)$, are uniformly bounded over $x$ and $l$.

For property~\ref{itm:concatenation}, let $q\colon X_1\times X_2\to X_1\times_\Lambda X_2$ be the quotient map. The measure on $X_1\times_\Lambda X_2$ is defined by $\vol(A)=\vol(q^{-1}(A)\cap X_1\times F_2)$ for any $\Lambda$--fundamental domain $F_2\subset X_2$. In the geometrical case a subset $A\subset X_1\times_\Lambda X_2$ is defined to be bounded iff $q^{-1}(A)\cap X_1\times F_2$ is bounded. This makes $(\Gamma,X_1\times_\Lambda X_2,\Pi)$ a (geometrical) left-cofinite coupling.

If $F_1\subset X_1$ and $F_2\subset X_2$ are fundamental domains for the $\Gamma$--, respectively the $\Lambda$--action, then $q$ restricted to $F_1\times F_2$ is injective and $F\vcentcolon=q(F_1\times F_2)$ is a $\Gamma$--fundamental domain on $X_1\times_\Lambda X_2$.
If $\chi_1$ and $\chi_2$ are the retracts associated to $F_1$ and $F_2$, then the map $\chi\colon X_1\times_\Lambda X_2\to\Gamma$ sending $q(x_1,x_2)$ to $\chi_1(x_1.\chi_2(x_2))$ is well-defined, compatible with the $\Gamma$--action, and sends $F$ to $\one\in\Gamma$. Hence, it is the retract associated to $F$. Now we compute on the level of cochains:
\begin{align*}
&\big(\tr_\Lambda^\Pi X_2\circ\tr_\Gamma^\Lambda X_1\big)(c)(\pi_0,\dots,\pi_n)\\
&\qquad=\int_{F_2}\int_{F_1} c\big(\chi_1(x_1.\chi_2(x_2.\pi_0)),\dots,\chi_1(x_1.\chi_2(x_2.\pi_n)\big)\\
&\qquad=\int_{F_1\times F_2} c\Big(\chi\big(q((x_1,x_2).\pi_0)\big),\dots,\chi\big(q((x_1,x_2).\pi_n)\big)\Big)\\
&\qquad=\int_F c\big(\chi(x.\pi_0),\dots,\chi(x.\pi_n)\big)\\
&\qquad=\tr_\Gamma^\Pi(X_1\times_\Lambda X_2)(c)(\pi_0,\dots,\pi_n).
\end{align*}
The second part of the statement follows directly from Equation~\eqref{eqn:transfer-formula}.

For property~\ref{itm:cross-product}, let $F_{1/2}$ be $\Gamma_{1/2}$--fundamental domains and $\chi_{1/2}$ the associated retracts. Then $F_1\times F_2$ is a $\Gamma_1\times\Gamma_2$--fundamental domain with associated retract $\chi_1\times\chi_2$. If $\xi_{1/2}$ are represented by cochains $c_{1/2}$ of degree $p$ and $q$, respectively, then we compute
\begin{align*}
&\tr_{\Gamma_1\times\Gamma_2}^{\Lambda_1\times\Lambda_2}(X_1\times X_2)(c_1\times c_2)(\lambda_0,\dots,\lambda_{p+q})\\
&\qquad=\int_{F_1\times F_2} (c_1\times c_2)\big((\chi_1\times\chi_2)((x_1,x_2).\lambda_0),\dots,(\chi_1\times\chi_2)((x_1,x_2).\lambda_{p+q})\big)\\
&\qquad=\int_{F_1\times F_2} c_1\big(\chi_1(x_1.\lambda_0),\dots,\chi_1(x_1.\lambda_p)\big)\cdot c_2\big(\chi_2(x_2.\lambda_p),\dots,\chi_2(x_2.\lambda_{p+q})\big)\\
&\qquad=\tr_{\Gamma_1}^{\Lambda_1}X_1(c_1)(\lambda_0,\dots,\lambda_p)\cdot\tr_{\Gamma_2}^{\Lambda_2}X_2(c_2)(\lambda_p,\dots,\lambda_{p+q})\\
&\qquad=\big(\tr_{\Gamma_1}^{\Lambda_1}X_1(c_1)\times\tr_{\Gamma_2}^{\Lambda_2}X_2(c_2)\big)(\lambda_0,\dots,\lambda_{p+q}).\qedhere
\end{align*}
\end{proof}

\section{Proof of the main results}

We now prove our lower bounds for the bounded and ordinary cohomology of the groups $\Homeo_{\vol,0}(M)$ and $\Diff_\vol(\disk,\partial\disk)$.
In all cases the proof strategy is to meet the requirements of the following lemma.
While its part~\ref{itm:direct-calculation} is directly taken from \cite{brandenbursky-marcinkowski}, our new results in degree $>3$ rely on part~\ref{itm:pairing-calculation}.
The trick of applying the Kronecker pairing to compensate for the lack of a Künneth formula in bounded cohomology was previously used in \cite{loeh}.

\begin{lemma}\label{lem:higher-degrees}
Let $\Gamma, \Lambda, G$ be discrete groups and $n\in\N_{>0}$.
Assume that:
\begin{enumerate}
\item\label{itm:coupling-step}
There exists a left-cofinite geometric coupling $(\Gamma,\sigma,X,\rho,\Lambda)$ between $\Gamma$ and $\Lambda$.
\item\label{itm:sequence-step}
There exist $n$--many sequences of group homomorphisms
$\big(\alpha_{i,l}\colon G\to\Lambda\big)_{i\leq n,l\in\N}$, such that for $l$ fixed the images of the homomorphisms $\{\alpha_{i,l}\}_{i\leq n}$ commute.
Furthermore, for each $i$ the sequence of actions $(\rho\circ\alpha_{i,l})_{l\in\N}$ converges, in the sense of Lemma~\ref{lem:transfer-properties}, property~\ref{itm:limits}, to an action $\beta_i$.
\item\label{itm:compute-step}
The sets of support $Y_i\vcentcolon=\{x\in X\mid\exists g\in G\colon\beta_i(g)(x)\neq x\}$ are disjoint and in degree $>0$ all the transfers $\tr_{\Gamma,\sigma}^{G,\beta_i}Y_i$ give the same map $\phi\colon\Hbr^*(\Gamma)\to\Hbr^*(G)$.
\end{enumerate}
Then it follows:
\begin{enumerate}[label=(\roman*), ref=\roman*]
\item\label{itm:direct-calculation}
$\dim\Hbr^d(\Lambda)\geq\dim\phi\big(\Hbr^d(\Gamma)\big)$, and
\item\label{itm:pairing-calculation}
$\dim\Hbr^d(\Lambda)\geq\dim\mathrm{GrSym}^n_d\big(\mathit{Kr}\circ\phi\big(\Hbr^*(\Gamma)\big)\big)$,
where $\mathit{Kr}\colon\Hbr^*(G)\to\big(\Ho^{\ell^1}_*(G)\big)'$ is the map induced by the Kronecker pairing, where $\mathrm{GrSym}^n_d$ is the degree--$d$ part of the $n$--fold graded-symmetric algebraic tensor product for graded vector spaces, and where we do not distinguish between infinite dimensions of different cardinalities.
\end{enumerate}
The analog results hold for the exact reduced bounded cohomology groups, replacing all $\Hbr$ with $\EHbr$.
If the sequences $(\rho\circ\alpha_{i,l})_{l\in\N}$ satisfy condition \eqref{eqn:bounded-convergence}, then the analog results hold also for ordinary cohomology.
\end{lemma}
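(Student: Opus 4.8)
The plan is to package hypotheses~\ref{itm:sequence-step} and~\ref{itm:compute-step} into one convergence statement in the cohomology of $G$ and $G^n$, and then to run a rank estimate that survives operator-norm limits. First I would form the map $\psi_l\colon G^n\to\Lambda$, $(g_1,\dots,g_n)\mapsto\alpha_{1,l}(g_1)\cdots\alpha_{n,l}(g_n)$, which is a group homomorphism exactly because the images of the $\alpha_{i,l}$ commute for fixed $l$; then $\psi_l^*\circ\tr_{\Gamma,\sigma}^{\Lambda,\rho}X=\tr_{\Gamma,\sigma}^{G^n,\rho\circ\psi_l}X$ by the second part of Lemma~\ref{lem:transfer-properties}, property~\ref{itm:concatenation}. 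The crucial claim is that the actions $\rho\circ\psi_l$ converge, in the sense of Lemma~\ref{lem:transfer-properties}, property~\ref{itm:limits}, to the $G^n$--action $\beta$ on $X$ that equals $\beta_i\circ\mathrm{pr}_i$ on each $Y_i$ (with $\mathrm{pr}_i\colon G^n\to G$ the $i$--th projection) and is trivial on $Z\vcentcolon=X\setminus\bigcup_iY_i$; here one uses that each $Y_i$ is $\Gamma$--invariant (since $\beta_i$ is a limit of $\Gamma$--equivariant maps) and, most importantly, that the $Y_i$ are disjoint -- to make $\beta$ well defined and to bound the error set $\{x\mid\exists\vec g\colon(\rho\circ\psi_l)(\vec g)(x)\neq\beta(\vec g)(x)\}$ by $\bigcup_i\bigcup_{g\in G}\{\rho(\alpha_{i,l}(g))(x)\neq\beta_i(g)(x)\}$. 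Granting this, Lemma~\ref{lem:transfer-properties}, property~\ref{itm:limits}, gives $\psi_l^*\circ\tr_\Gamma^\Lambda X\to\tr_\Gamma^{G^n,\beta}X$ in the operator norm on $\Hbr^*(G^n)$. Since $X=Y_1\sqcup\dots\sqcup Y_n\sqcup Z$ is a decomposition into $\Gamma$-- and $G^n$--invariant subcouplings, the transfer formula~\eqref{eqn:transfer-formula} is additive over it (by the same computation as in property~\ref{itm:sums-and-products}), and using $\tr_\Gamma^{G^n}(Y_i,\beta_i\circ\mathrm{pr}_i)=\mathrm{pr}_i^*\circ\tr_\Gamma^{G,\beta_i}Y_i=\mathrm{pr}_i^*\circ\phi$ (property~\ref{itm:concatenation} again) together with the vanishing in positive degrees of the transfer of a coupling with trivial acting group (it factors through the cohomology of the trivial group), one obtains
\[
\psi_l^*\circ\tr_\Gamma^\Lambda X\ \xrightarrow{\ l\to\infty\ }\ \Phi\vcentcolon=\textstyle\sum_{i=1}^n\mathrm{pr}_i^*\circ\phi\colon\ \Hbr^{*}(\Gamma)\longrightarrow\Hbr^{*}(G^n)\qquad\text{in positive degrees.}
\]

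Part~\ref{itm:direct-calculation} already follows from one of the sequences: the same reasoning with $n=1$ gives $\alpha_{1,l}^*\circ\tr_\Gamma^\Lambda X\to\tr_\Gamma^{G,\beta_1}Y_1=\phi$ in operator norm, and if this limit is injective on an $r$--dimensional subspace $V$ then $\inf\{\|\cdot\|\text{ on the unit sphere of }V\}>0$, so the terms of the sequence are eventually injective on $V$; hence $\dim\Hbr^d(\Lambda)\geq\dim\tr_\Gamma^\Lambda X(\Hbr^d(\Gamma))\geq\dim\phi(\Hbr^d(\Gamma))$.

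For part~\ref{itm:pairing-calculation}, as in \cite{loeh} I would detect cup products by pairing with homology classes pushed forward along $\psi_l$. Choose a homogeneous basis $(w_s)_{s\in S}$ of $W\vcentcolon=\mathit{Kr}(\phi(\Hbr^*(\Gamma)))\subseteq(\Hlr_*(G))'$, write $w_s=\mathit{Kr}(\phi(\xi_s))$ with $\xi_s\in\Hbr^{\deg w_s}(\Gamma)$, and for each occurring degree $k$ choose a family $(z_s)_{\deg w_s=k}$ in $\Hlr_k(G)$ with $\langle w_s,z_t\rangle=\delta_{st}$, which is possible because the $w_s$ are linearly independent functionals. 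To a monomial $\mathbf d=(s_1,\dots,s_n)$ of total degree $d$ (a size-$n$ multiset in $S$, odd-degree generators of multiplicity at most $1$) associate $\Xi_{\mathbf d}\vcentcolon=\tr_\Gamma^\Lambda X(\xi_{s_1})\cup\dots\cup\tr_\Gamma^\Lambda X(\xi_{s_n})\in\Hbr^d(\Lambda)$ and $\zeta_{\mathbf d}\vcentcolon=z_{s_1}\times\dots\times z_{s_n}\in\Hlr_d(G^n)$. Since $\psi_l^*$ is a ring homomorphism, cup product is norm-continuous, and the Kronecker pairing is natural, for monomials $\mathbf d=(s_1,\dots,s_n)$ and $\mathbf e=(t_1,\dots,t_n)$ one gets $\langle\Xi_{\mathbf d},(\psi_l)_*\zeta_{\mathbf e}\rangle=\langle\psi_l^*\Xi_{\mathbf d},\zeta_{\mathbf e}\rangle\to\langle\Phi(\xi_{s_1})\cup\dots\cup\Phi(\xi_{s_n}),\zeta_{\mathbf e}\rangle$ as $l\to\infty$. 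Expanding each $\Phi(\xi_{s_i})=\sum_j\mathrm{pr}_j^*\phi(\xi_{s_i})$ writes this cup product as a sum over functions $f\colon\{1,\dots,n\}\to\{1,\dots,n\}$ of cross products over the factors of $G^n$; pairing with $\zeta_{\mathbf e}$, a non-surjective $f$ pairs the degree-$0$ class $\one$ against some positive-degree $z_{t_j}$ and contributes $0$, while a bijective $f$ contributes up to sign $\prod_j\langle w_{s_{f^{-1}(j)}},z_{t_j}\rangle=\prod_j\delta_{s_{f^{-1}(j)},t_j}$, nonzero exactly when $\mathbf d$ and $\mathbf e$ coincide as multisets -- in which case the surviving $f$ are the permutations within blocks of equal (hence even-degree) generators and all carry the same sign. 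So the limit is $0$ for $\mathbf d\neq\mathbf e$ and a nonzero scalar for $\mathbf d=\mathbf e$, the $\Xi_{\mathbf d}$ are linearly independent, and letting $\Span\{w_s\}$ exhaust $W$ gives $\dim\Hbr^d(\Lambda)\geq\dim\mathrm{GrSym}^n_d(W)$.

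Replacing $\Hbr$ by $\EHbr$ requires nothing new, as every map used ($\tr_\Gamma^\Lambda X$, $\psi_l^*$, $\mathrm{pr}_i^*$, cup products) is natural for the comparison map and restricts to the exact reduced bounded cohomology. For ordinary cohomology one argues identically, using Lemma~\ref{lem:transfer-properties}, property~\ref{itm:limits}, in its Kronecker-pairing form -- which needs condition~\eqref{eqn:bounded-convergence}, available for $\rho\circ\psi_l$ by the same disjoint-support argument -- the Künneth isomorphism in ordinary homology (to reduce convergence of cup-product pairings to that of cross-product pairings), and the injectivity over $\R$ of $\Ho^d(G)\to(\Ho_d(G))'$. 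The step I expect to be the main obstacle is the convergence $\rho\circ\psi_l\to\beta$ of the first paragraph: a priori the composite $\rho(\alpha_{1,l}(g_1)\cdots\alpha_{n,l}(g_n))$ can drag the individual error sets of the $\rho\circ\alpha_{i,l}$ across a fundamental domain, and it is precisely the disjointness of the supports $Y_i$ in hypothesis~\ref{itm:compute-step} that keeps this in check, both for the measure estimate and for condition~\eqref{eqn:bounded-convergence}.
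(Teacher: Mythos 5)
Your proposal is, at its core, the paper's own argument: the homomorphisms $\psi_l=\prod_i\alpha_{i,l}$ (the paper's $\alpha_l$), the limit action supported on the disjoint $Y_i$ together with the error-set bound you single out as the main obstacle (the paper uses literally that inclusion, so you are aligned with it there), the identification of the limiting transfer with $\sum_i\mathrm{pr}_i^*\circ\phi$ in positive degrees via additivity over $Y_1\sqcup\dots\sqcup Y_n\sqcup Z$ and vanishing on the trivial part, and the detection of cup products of transferred classes by Kronecker-pairing against cross products of $\ell^1$--homology classes of $G$. The only structural difference is packaging: the paper forms the product coupling $(\Gamma^n,\sigma^n,X^n,\rho^n\circ\Delta_\Lambda,\Lambda)$ and reduces part (ii) to part (i) applied to that coupling, whereas you stay inside $\Hbr^*(\Lambda)$, use that $\psi_l^*$ is a ring homomorphism, and push homology classes forward along $\psi_l$; by naturality of the pairing the two computations coincide, so this only trades the construction of $X^n$ for an appeal to norm-continuity of the cup product on reduced classes and, in the ordinary-cohomology case, an extra K\"unneth step that the paper's route does not need. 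Your compactness argument for part (i) is also fine, though the paper's per-vector argument is simpler and needs no finite-dimensional exhaustion.

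The one point where you prove less than the paper: your monomials are length-$n$ multisets of \emph{positive-degree} generators of $W$, while the paper extends $\phi$ to $\Hbr^0$ (where the transfer is multiplication by a positive scalar) and admits the resulting degree-$0$ index in its tuples, so that $\mathrm{GrSym}^n_d$ is nonzero also when $d<2n$. This is what allows the theorems to take any $n$ with $2n\geq d$, and the ordinary-cohomology application (Theorem~\ref{thm:M-ordinary}, all $d\geq 0$) genuinely uses such degree-$0$ padding. With a degree-$0$ generator present your clean orthogonality breaks down -- a non-surjective $f$ can pair the empty cup product $\one$ against the degree-$0$ dual homology class and survive -- which is why the paper only obtains a triangular, not diagonal, pairing matrix and argues via the minimal multiplicity of the degree-$0$ index. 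For the $\EHbr$ statements this is immaterial ($\EHbr^0(\Gamma)=\{0\}$), and your version of the ordinary-cohomology analog could be repaired by choosing $n=\lceil d/2\rceil$ in the applications; but as written your part (ii) establishes the lemma only under the reading that $\phi$, and hence $W$, is concentrated in positive degrees.
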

\begin{proof}
We focus on the reduced bounded cohomology. The cases for exact reduced bounded cohomology and ordinary cohomology are completely analogous.

\textbf{i)} We may assume $n=1$ and $d>0$. Let $V\subset\Hbr^d(\Gamma)$ be a linear subspace such that $\dim V=\dim\phi\big(\Hbr^d(\Gamma)\big)$ and such that $\phi$ is injective on $V$. For any $v\neq 0\in V$ we have $\tr_{\Gamma,\sigma}^{G,\beta_1}X(v)=\phi(v)\neq 0$ and it follows from Lemma
\ref{lem:transfer-properties}, property~\ref{itm:limits}
that there exists an $l$ such that $0\neq\tr_{\Gamma,\sigma}^{G,\rho\circ\alpha_{1,l}}X(v)=(\alpha_{1,l})^*\circ\tr_{\Gamma,\sigma}^{\Lambda,\rho}X(v)$. Hence, $\tr_{\Gamma,\sigma}^{\Lambda,\rho}X$ is injective on $V$ and the statement follows.

\textbf{ii)}
Let $\Delta_\Lambda\colon\Lambda\to\Lambda^n$ be the diagonal homomorphism.
From the given data we construct the new left-cofinite coupling $(\Gamma^n,\sigma^n,X^n,\rho^n\circ\Delta_\Lambda,\Lambda)$ and the new sequence of group homomorphisms $(\alpha_l)_{l\in\N}$ where $\alpha_l\vcentcolon=\prod_i\alpha_{i,l}\colon G^n\to\Lambda$.
Because the supports $Y_i$ are disjoint, we have
\[\bigcup_{\bar{g}\in G^n}\Big\{x\in X\Bigm|\rho\circ\alpha_{l}(\bar{g})(x)\neq\big({\textstyle\prod_i}\beta_i\big)(\bar{g})(x)\Big\}
\subset\bigcup_{i\leq n}\bigcup_{g\in G}\Big\{x\in X\Bigm|\rho\circ\alpha_{i,l}(g)(x)\neq\beta_i(g)(x)\Big\},\]
and it follows that
the sequence of group actions $(\rho\circ\alpha_l)_{l\in\N}$ converges to $\prod_i\beta_i$.
If all sequences $(\rho\circ\alpha_{i,l})_{l\in\N}$ satisfy condition \eqref{eqn:bounded-convergence}, then the same is true for $(\rho\circ\alpha_l)_{l\in\N}$ because it suffices to check this condition on the generators of $G^n$.

From here, the sequence of product group actions $\big((\rho\circ\alpha_l)^n\big)_{l\in\N}$ of $(G^n)^n$ on $X^n$ converges to $(\prod_i\beta_i)^n$, and the group actions $(\rho^n\circ\Delta_\Lambda\circ\alpha_l)_{l\in\N}=\big((\rho\circ\alpha_l)^n\circ\Delta_{G^n}\big)_{l\in\N}$ converge to $\beta\vcentcolon=(\prod_i\beta_i)^n\circ\Delta_{G^n}$.
To prove the statement we will apply part~\ref{itm:direct-calculation} to the coupling $(\Gamma^n,\sigma^n,X^n,\rho^n\circ\Delta_\Lambda,\Lambda)$ and the single sequence of group homomorphisms $(\alpha_l)_{l\in\N}$. It remains to show that $\dim\tr_{\Gamma^n,\sigma^n}^{G^n,\beta}X^n\big(\Hbr^d(\Gamma^n)\big)\geq\dim\mathrm{GrSym}^n_d\big(\mathit{Kr}\circ\phi\big(\Hbr^*(\Gamma)\big)\big)$.

Using Lemma~\ref{lem:transfer-properties}, properties \ref{itm:concatenation} and \ref{itm:cross-product}, we compute for all $\xi_1,\dots,\xi_n\in\Hbr^*(\Gamma)$:
\begin{align*}
\tr_{\Gamma^n,\sigma^n}^{G^n,\beta}X^n(\xi_1\times\ldots\times\xi_n)
&=(\Delta_{G^n})^*\circ\tr_{\Gamma^n,\sigma^n}^{(G^n)^n,(\Pi_i\beta_i)^n}X^n(\xi_1\times\ldots\times\xi_n)\\
&=(\Delta_{G^n})^*\Big(\tr_{\Gamma,\sigma}^{G^n,\Pi_i\beta_i}X(\xi_1)\times\ldots\times\tr_{\Gamma,\sigma}^{G^n,\Pi_i\beta_i}X(\xi_n)\Big)\\
&=\tr_{\Gamma,\sigma}^{G^n,\Pi_i\beta_i}X(\xi_1)\cup\ldots\cup\tr_{\Gamma,\sigma}^{G^n,\Pi_i\beta_i}X(\xi_n).
\end{align*}
For $\xi_j\in\Hbr^{>0}(\Gamma)$ the cup product factors can be simplified further by decomposing $X$ as the disjoint union of the supports $\{Y_j\}_{j\leq n}$ and some inconsequential remainder $Y_R$.
Letting $q_j\colon G^n\to G$ denote the projection onto the $j$--th factor, we obtain from Lemma~\ref{lem:transfer-properties}, property~\ref{itm:sums-and-products}:
\begin{align*}
\tr_{\Gamma,\sigma}^{G^n,\Pi_i\beta_i}X(\xi)
&=\sum_{j\leq n}\tr_{\Gamma,\sigma}^{G^n,\Pi_i\beta_i}Y_{j}(\xi)
=\sum_{j\leq n}\tr_{\Gamma,\sigma}^{G^n,\beta_{j}\circ q_{j}}Y_{j}(\xi)\\
&=\sum_{j\leq n}{q_{j}}^*\circ\tr_{\Gamma,\sigma}^{G,\beta_{j}}Y_{j}(\xi)
=\sum_{j\leq n}{q_{j}}^*\circ\phi(\xi).
\end{align*}
On $\Hbr^0$, meanwhile, $\tr_{\Gamma,\sigma}^{G^n,\Pi_i\beta_i}X$ is just multiplication by a positive scalar. For convenience we define $\phi$ also on $\Hbr^0$ in such a way that the preceding equality holds.

For each degree $d$ we find subsets $\{z_k\}_{k\in K_d}\subset\Hbr^d(\Gamma)$ and $\{z'_k\}_{k\in K_d}\subset\Ho^{\ell^1}_d(G)$ such that $\langle\phi(z_k),z'_{k'}\rangle=\delta_{k=k'}$ and such that $|K_d|=\dim\mathit{Kr}\circ\phi\big(\Hbr^d(\Gamma)\big)$ (in the infinite-dimensional case we can find $K_d$ countably infinite).
Let $0\in K_0$ denote the unique index in degree~$0$. (In the analog proof for exact reduced bounded cohomology we have $\EHbr^0(\Gamma)=\{0\}$ and avoid the complications arising from this element.)
We fix a total order on $K\vcentcolon=\bigsqcup_d K_d$ and let $\bar{K}\subset K^n$ denote those $n$--tuples that are ordered and may contain duplicate entries only of indices $k$ where the degree $|z_k|$ is even.

Assume now that $\bar{k}=(k_1,\dots,k_n)$ and $\bar{k}'=(k'_1,\dots,k'_n)$ are two tuples in $\bar{K}$.
For any partition function
$P\in\mathcal{P}\vcentcolon=\big\{\{1,\dots,n\}\to\{1,\dots,n\}\big\}$ we let $S_P$
be a permutation such that $P\circ S_P$ is monotonously increasing, we denote by $\varepsilon_P$ the sign that arises from the graded-commutative reordering of $(z_{k_1},\dots,z_{k_n})$ to $(z_{k_{S_P(1)}},\dots,z_{k_{S_P(n)}})$, and we denote by $L_P^{(i)}$ the ordered subset $(P\circ S_P)^{-1}(i)\subset\{1,\dots,n\}$.
The Kronecker pairing of
\begin{align*}
\tr_{\Gamma^n,\sigma^n}^{G^n,\beta}X^n(z_{k_1}\times\ldots\times z_{k_n})
&=\bigcup_{i=1}^n\sum_{j=1}^n{q_j}^*\circ \phi(z_{k_i})
=\sum_{P\in\mathcal{P}}\bigcup_{i=1}^n q_{P(i)}^*\circ\phi(z_{k_i})\\
&=\sum_{P\in\mathcal{P}}\varepsilon_P\cdot\bigcup_{i=1}^n q_{P\circ S_P(i)}^*\circ\phi(z_{k_{S_P(i)}})\\
&=\sum_{P\in\mathcal{P}}\varepsilon_P\cdot \Big({\textstyle\bigcup_{i\in L_P^{(1)}}}\phi(z_{k_i})\Big)\times\dots\times\Big({\textstyle\bigcup_{i\in L_P^{(n)}}}\phi(z_{k_i})\Big)
\end{align*}
with $z'_{k'_1}\times\dots\times z'_{k'_n}$ is given by the sum
$\sum_{P\in\mathcal{P}}\varepsilon_P\cdot\prod_{j=1}^n\big\langle{\textstyle\bigcup_{i\in K_{S_P}^{(j)}}}\phi(z_{k_i}),z'_{k'_j}\big\rangle$.
If the multiplicity of $0$ is higher in $\bar{k}$ than in $\bar{k}'$, then all its summands are zero, and the same happens if the multiplicities are the same but $\bar{k}\neq\bar{k}'$. If $\bar{k}=\bar{k}'$, however, there appear non-zero summands and they all have the same sign $\varepsilon_P=1$.

Now, if $w$ is a non-trivial linear combination over $W=\big\{\tr_{\Gamma^n,\sigma^n}^{G^n,\beta}X^n(z_{k_1}\times\ldots\times z_{k_n})\big\}_{\bar{k}\in\bar{K}}$
and $(k_1,\dots,k_n)$ describes an element in the support of $w$ with a minimal multiplicity of $0\in K$, then
$\langle w,z'_{k_1}\times\dots\times z'_{k_n}\rangle\neq 0$.
Consequently, the classes in $W$ are linearly independent and $\dim\tr_{\Gamma^n,\sigma^n}^{G^n,\beta}X^n\big(\Hbr^d(\Gamma^n)\big)\geq |W| =\dim\mathrm{GrSym}^n_d\big(\mathit{Kr}\circ\phi\big(\Hbr^*(\Gamma)\big)\big)$.
\end{proof}

To obtain the coupling and the sequences of group homomorphisms required by the previous lemma, we follow the constructions of Brandenbursky--Marcinkowski \cite{brandenbursky-marcinkowski}, respectively Kimura \cite{kimura}, with only some slight modifications. For the convenience of the reader we present the full argument, adapted to the setting of couplings between groups.

\begin{theorem}\label{thm:M-bounded}
Let $M$ be a compact Riemannian manifold of dimension $\geq 3$ and assume that $\pi_1(M)$ surjects onto the free group $\frgrp_2$.
Then $\dim\EHbr^d(\Homeo_{\vol,0}(M))=\infty$ for $d=3$ and for $d\geq 2$ even.
\end{theorem}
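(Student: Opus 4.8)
The plan is to verify the three hypotheses of Lemma~\ref{lem:higher-degrees} with $\Gamma=G=\frgrp_2$ and $\Lambda=\Homeo_{\vol,0}(M)$, taking $n\vcentcolon=1$ when $d=3$ and $n\vcentcolon=d/2$ when $d\geq 2$ is even. For the coupling of hypothesis~\ref{itm:coupling-step}, fix an epimorphism $\psi\colon\pi_1(M)\twoheadrightarrow\frgrp_2$. Since $\frgrp_2$ has trivial centre, $\psi$ vanishes on $Z(\pi_1(M))$; as the ambiguity in lifting an isotopy of $h\in\Homeo_{\vol,0}(M)$ to $\id$ is governed by a central element of $\pi_1(M)$ (the evaluation of the corresponding loop of homeomorphisms), $h$ lifts to a well-defined homeomorphism $\widehat h$ of the regular cover $\widehat M\to M$ belonging to $\ker\psi$, and $\widehat h$ commutes with the $\frgrp_2$--action by deck transformations. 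Declaring the bounded subsets of $\widehat M$ to be the precompact ones, the argument of Example~\ref{exm:main-example}, applied to the cover $\widehat M$ in place of the universal cover, shows that $(\frgrp_2,\mathrm{deck},\widehat M,h\mapsto\widehat h,\Homeo_{\vol,0}(M))$ is a geometrical left-cofinite coupling, a lift of $M$ serving as a bounded fundamental domain of finite measure.

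For the sequences of hypothesis~\ref{itm:sequence-step}, use that $\dim M\geq 3$ to choose pairwise disjoint embedded balls $B_1,\dots,B_n\subset M$ and, for each $i$, loops based in $B_i$ representing via $\psi$ a fixed free basis of $\frgrp_2$, arranged so that the tubes swept by the associated point-pushing isotopies are pairwise disjoint for distinct $i$ (general position, the loops having codimension $\geq 2$). Because $\frgrp_2$ is free, these data define homomorphisms $\alpha_i\colon\frgrp_2\to\Homeo_{\vol,0}(M)$ into volume-preserving point-pushing homeomorphisms, and the disjointness of supports makes the images of $\{\alpha_i\}_i$ commute. Passing to a localising family $(\alpha_{i,l})_l$ in the manner of \cite{brandenbursky-marcinkowski}, the induced actions $\rho\circ\alpha_{i,l}$ on $\widehat M$ converge, in the sense of Lemma~\ref{lem:transfer-properties}\,\ref{itm:limits} and obeying \eqref{eqn:bounded-convergence}, to limit actions $\beta_i$ whose supports $Y_i$ are the $\frgrp_2$--saturations of neighbourhoods of the respective tubes; these are pairwise disjoint since the tubes are, which also gives the disjointness part of hypothesis~\ref{itm:compute-step}.

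The remaining input, and the main obstacle, is the computation of $\tr_{\frgrp_2,\mathrm{deck}}^{\frgrp_2,\beta_i}Y_i$, which is the core computation of Brandenbursky and Marcinkowski recast in the language of couplings. One tracks the lift to $\widehat M$ of a point-pushing word: a point of the fundamental domain lying in $B_i$ is transported to its translate by the corresponding element of $\frgrp_2$, so that on that part of the fundamental domain the integrand of \eqref{eqn:transfer-formula} is the pull-back of the cochain along an automorphism of $\frgrp_2$, while the contribution of the remainder of the fundamental domain is a coboundary and hence invisible in reduced cohomology. By Lemma~\ref{lem:transfer-properties}\,\ref{itm:morphisms} and~\ref{itm:sums-and-products} it then follows that $\tr_{\frgrp_2,\mathrm{deck}}^{\frgrp_2,\beta_i}Y_i$ equals $\vol(B_i)$ times the automorphism of $\Hbr^*(\frgrp_2)$ induced by that automorphism of $\frgrp_2$; taking the $B_i$ of equal volume makes this common map $\phi\colon\Hbr^*(\frgrp_2)\to\Hbr^*(\frgrp_2)$ independent of $i$ and, as a nonzero multiple of an isomorphism, injective, establishing hypothesis~\ref{itm:compute-step}. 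The delicate points are the faithful bookkeeping of the point-push on the cover, the choice of the family $(\alpha_{i,l})_l$ making Lemma~\ref{lem:transfer-properties}\,\ref{itm:limits} applicable, and the verification that the ``off-ball'' contribution is a coboundary.

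Finally, apply Lemma~\ref{lem:higher-degrees} in its form for $\EHbr$. For $d=3$, with $n=1$, conclusion~\ref{itm:direct-calculation} together with the injectivity of $\phi$ gives $\dim\EHbr^3(\Homeo_{\vol,0}(M))\geq\dim\phi\big(\EHbr^3(\frgrp_2)\big)=\dim\EHbr^3(\frgrp_2)=\infty$, the last equality being the infinite-dimensionality of $\EHbr^3$ of a free group already used in \cite{brandenbursky-marcinkowski}. For $d=2n$ even, conclusion~\ref{itm:pairing-calculation} gives $\dim\EHbr^{2n}(\Homeo_{\vol,0}(M))\geq\dim\mathrm{GrSym}^n_{2n}\big(\mathit{Kr}\circ\phi(\EHbr^*(\frgrp_2))\big)$; here $\mathit{Kr}$ is injective on $\Hbr^*(\frgrp_2)$ since reduced bounded cohomology is the topological dual of reduced $\ell^1$--homology, so $\mathit{Kr}\circ\phi$ is injective, and $\EHbr^2(\frgrp_2)$ --- isomorphic to the space of homogeneous quasimorphisms of $\frgrp_2$ modulo homomorphisms --- is infinite-dimensional; hence $\mathrm{GrSym}^n_{2n}$ contains the $n$--th symmetric power of an infinite-dimensional space in degree $2$ and is infinite-dimensional, giving $\dim\EHbr^{2n}(\Homeo_{\vol,0}(M))=\infty$.
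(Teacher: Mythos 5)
Your overall strategy is the paper's: verify the three hypotheses of Lemma~\ref{lem:higher-degrees} for a monodromy coupling on a cover of $M$, use $n$ families of localized push maps converging to limit actions with disjoint supports, identify each $\tr Y_i$ with a fixed positive multiple of a pullback that hits all of $\EHbr^{*}(\frgrp_2)$, and then feed in degree~$3$ (Soma) and degree~$2$ (quasimorphism) input. Your choice of coupling differs slightly and is legitimate: you work on $\widetilde M/\ker\psi$ with deck group $\frgrp_2$, so $\Gamma=G=\frgrp_2$, and the monodromy action is well defined exactly for the reason you give ($Z(\pi_1(M))\subset\ker\psi$ since $\psi$ is onto a centerless group); the paper instead uses the cover associated to $Z(\pi_1(M))$, takes $\Gamma=\pi_1(M)/Z(\pi_1(M))$, and recovers surjectivity onto $\EHbr^*(\frgrp_2)$ via a splitting of $r$. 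One caution on step~(3): the honest mechanism for discarding the ``off-ball'' part of the support is not that some cochain is visibly a coboundary, but that after decomposing the support as in Lemma~\ref{lem:transfer-properties}\,\ref{itm:sums-and-products} the $G$--action on the pieces met by only one tube is given by a homomorphism factoring through $\Z$, whose bounded cohomology vanishes in positive degrees, so those summands contribute $0$ on $\EHbr^{>0}$; you flag this as delicate but never supply the argument, and for the single-$i$ overlap you should also arrange (general position, $\dim M\geq 3$) that the two tubes meet only near $B_i$ so that the resulting constant is the same for all $i$.

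The genuine gap is in the even-degree conclusion: you justify injectivity of $\mathit{Kr}$ by asserting that ``reduced bounded cohomology is the topological dual of reduced $\ell^1$--homology.'' That is not an available general principle. By Hahn--Banach the Kronecker map $\Hbr^n(\Gamma)\to\big(\Hlr_n(\Gamma)\big)'$ is surjective, but injectivity is precisely the delicate point: a bounded cocycle pairing to zero with every $\ell^1$--cycle lies only in the weak-$*$ closure of the coboundaries, which need not coincide with the norm closure used to define $\Hbr^n$; for general Banach chain complexes the corresponding map does have kernel. What your argument actually needs is only the degree-$2$ statement for $\frgrp_2$: that $\mathit{Kr}$ is injective on $\EHbr^2(\frgrp_2)=\EHb^2(\frgrp_2)$ (indeed an isomorphism onto $\big(\Hl_2(\frgrp_2)\big)'$), which is exactly what the paper extracts from \cite{matsumoto-morita}*{Corollary~2.7 and Theorem~2.3}, combined with \cite{brooks} for infinite-dimensionality; with that citation in place of your general duality claim, your $\mathrm{GrSym}^n_{2n}$ argument and the $d=3$ case via \cite{soma} go through as you outline.
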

\begin{proof}
Let $\Gamma\vcentcolon=\pi_1(M)/Z(\pi_1(M))$ be the quotient of the fundamental group of $M$ by its center, and let $\Lambda\vcentcolon=\Homeo_{\vol,0}(M)$ with the multiplication $\lambda_1\lambda_2=\lambda_2\circ\lambda_1$. For each degree $d$ we also fix some $n\in\N$ such that $2n\geq d$.

\textbf{1)}
Let the measure space of the required coupling $(\Gamma,\sigma,X,\rho,\Lambda)$ be the covering space
$p\colon X\to M$ associated to the subgroup $Z(\pi_1(M))<\pi_1(M)$. Its measure is induced from $M$ and we take the precompact subsets of $X$ as the bounded sets, as in Example~\ref{exm:main-example}.
There is a left action $\sigma\colon\Gamma\curvearrowright X$ of deck transformations.
Let the right action $\rho\colon X\curvearrowleft\Lambda$ be defined via monodromy by $x.\lambda\vcentcolon=\bar{H}_1(x)$, where $\bar{H}\colon [0,1]\times X\to X$ is the lift of any isotopy $H$ from the identity on $M$ to $\lambda$.

We check that $\rho$ is well-defined: If $H'$ is a second isotopy from the identity to $\lambda$ and $\widetilde{H},\widetilde{H}'$ are the lifts of $H,H'$ to the universal covering, then $\widetilde{H}'_1\circ(\widetilde{H}_1)^{-1}$ is a deck transformation and hence agrees with the action of some $\gamma\in\pi_1(M)$. Since $\widetilde{H}'_1\circ(\widetilde{H}_1)^{-1}$ is $\pi_1(M)$--equivariant, $\gamma\in Z(\pi_1(M))$. Descending to $X$, we get $\bar{H}'_1\circ(\bar{H}_1)^{-1}=\id$.

\textbf{2)}
Let $G\vcentcolon=\frgrp_2=\langle a_1,a_2\rangle$ be the free group over two generators. Let $s\colon\frgrp_2\to\pi_1(M)$ be a split of a surjection $r\colon\pi_1(M)\to\frgrp_2$, and let $\{\gamma_{i,1},\gamma_{i,2}\}_{i\leq n}$ be $n$--many copies of two loops in $M$ that represent $s(a_{1})$, respectively $s(a_2)$.
Since $\dim M\geq 3$, we may perturb the loops slightly to smooth embeddings of the circle, such that the pairs $\gamma_{i,1},\gamma_{i,2}$ intersect and all other intersections are empty. We take tubular neighborhoods $N(\gamma_{i,1/2})$ of the loops such that the only non-empty intersections are $N(\gamma_{i,1})\cap N(\gamma_{i,2})$ and these all have the same volume $\kappa$.
Using the relative version of Moser's trick \cite{banyaga} we identify each $N(\gamma_{i,1/2})$, by a volume-preserving homeomorphism, with $S^1\times B_\epsilon$, the product of the standard circle with a ball in $(\dim M-1)$--dimensional Euclidean space.

We define the sequences of homomorphisms $\big(\alpha_{i,l}\colon G=\frgrp_2\to\Lambda\big)_{i\leq n,l\in\N}$ by the actions of the two generators: 
For $j\in\{1,2\}$ we let the map $\alpha_{i,l}(a_j)\colon X\to X$ be the identity outside of $N(\gamma_{i,j})$ for all $l\in\N$. Inside $N(\gamma_{i,j})\cong S^1\times B_\epsilon$ we let it be the end result of the ``finger-push'' homotopy that fixes the boundary, pushes $S^1\times B_{\epsilon-\epsilon/l}$ once around the circle and interpolates continuously on the rest.
The sequence $(\rho\circ\alpha_{i,l})_{l\in\N}$ converges, in the sense of Lemma~\ref{lem:transfer-properties}, property~\ref{itm:limits}, to the action $\beta_i$, where $\beta_i(a_j)$ is induced by rotating the whole subset $N(\gamma_{i,j})$ and fixing its complement.

\textbf{3)}
For each $i$ the limit action $\beta_i$ has support
$Y_i\vcentcolon=p^{-1}\big(N(\gamma_{i,1})\cup N(\gamma_{i,2})\big)\subset X$.
The coupling $(\Gamma,\sigma,Y_i,\beta_i,G)$ decomposes into a disjoint union of products:
\begin{equation*}\begin{gathered}
Y_i=\,\, p^{-1}(X_{i,0})\sqcup p^{-1}(X_{i,1})\sqcup p^{-1}(X_{i,2})\,\,\cong\,\,\Gamma\times X_{i,0}\sqcup\Gamma\times X_{i,1}\sqcup\Gamma\times X_{i,2},\\[-0.0cm]
\text{where}\quad X_{i,0}\vcentcolon=N(\gamma_{i,1})\cap N(\gamma_{i,2}),\quad X_{i,1/2}\vcentcolon=N(\gamma_{i,1/2})\setminus N(\gamma_{i,2/1}).
\end{gathered}\end{equation*}
By Lemma~\ref{lem:transfer-properties}, properties \ref{itm:morphisms} and \ref{itm:sums-and-products}, we then have
$\tr_{\Gamma,\sigma}^{G,\beta_i} Y_i=\sum_{j=0}^2 \vol\big(X_{i,j}\big)\cdot{\varphi_j}^*$,
where $\varphi_j\colon G\to\Gamma$ denotes the group homomorphism that describes the $G$--action on the $j$--th summand.
On the summands $\Gamma\times X_{i,1}$ and $\Gamma\times X_{i,2}$ the $G$--action factors through the group $\Z$, which has trivial bounded cohomology in positive degrees.
On $\Gamma\times X_{i,0}$ we have $\varphi_0=q_\Gamma\circ s$, where $q_\Gamma\colon\pi_1(M)\to\Gamma$ is the quotient homomorphism.
In summary, we get that
$\tr_{\Gamma,\sigma}^{G,\beta_i}Y_i=\phi\vcentcolon=\kappa\cdot (q_\Gamma\circ s)^*$ on $\EHbr^{>0}$.
Because $q_\Gamma\circ s$ has a left-inverse, induced from $r$, the map $\phi$ is surjective onto $\EHbr^*(\frgrp_2)$.

The statement for $d=3$ now follows from Lemma~\ref{lem:higher-degrees}, part~\ref{itm:direct-calculation} and the fact that $\EHbr^3(\frgrp_2)$ is infinite-dimensional \cite{soma}. For $d\geq 2$ even, it follows from Lemma~\ref{lem:higher-degrees}, part~\ref{itm:pairing-calculation} because
$\Hb^2(\frgrp_2)$ is infinite-dimensional \cite{brooks}, and because
the Kronecker map $\mathit{Kr}$ sets up an isomorphism $\EHb^2(\frgrp_2)=\EHbr^2(\frgrp_2)\cong\big(\Hlr_2(\frgrp_2)\big)'=\big(\Hl_2(\frgrp_2)\big)'$ by
\cite{matsumoto-morita}*{Corollary~2.7 and Theorem~2.3}.
\end{proof}

\begin{theorem}\label{thm:disk-bounded}
Let $\Diff_\vol(\disk,\partial\disk)$ denote the group of volume-preserving (smooth) diffeomorphisms of the standard $2$--disk that restrict to the identity in a neighborhood of the boundary.
Then $\dim\EHbr^d(\Diff_\vol(\disk,\partial\disk))=\infty$ for $d=3$ and for $d\geq 2$ even.
\end{theorem}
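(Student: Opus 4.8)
The strategy is to reproduce the proof of Theorem~\ref{thm:M-bounded}, replacing the manifold $M$ and its fundamental group by the $2$--disk and the pure braid group, following Kimura~\cite{kimura}. We set $\Lambda\vcentcolon=\Diff_\vol(\disk,\partial\disk)$, again with the opposite multiplication, and fix $n\in\N$ with $2n\geq d$. The key point is to recognize a suitable left-cofinite geometric coupling: for $m$ marked points in the interior of $\disk$, the relevant group on the algebraic side is the pure braid group $\Gamma\vcentcolon=P_m=\pi_1(\mathrm{Conf}_m(\mathring{\disk}))$, and one takes $X$ to be the universal covering of $\mathrm{Conf}_m(\mathring{\disk})$, with the measure pulled back from a finite volume form on the (non-compact, but finite-volume) open configuration space. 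The deck action of $\Gamma$ is $\sigma$, and $\Lambda$ acts on $X$ via $\rho$ by lifting isotopies of configurations induced by an ambient isotopy in $\Diff_\vol(\disk,\partial\disk)$ — this is well-defined because diffeomorphisms fixing a boundary neighborhood are isotopic to the identity rel that neighborhood, and the simply-connectedness of $\disk$ (so that $\pi_1(\mathrm{Conf}_m(\mathring\disk))$ maps onto itself, with the relevant ambiguity living in the center) makes the monodromy independent of the chosen isotopy, exactly as in Step~1 of the proof of Theorem~\ref{thm:M-bounded}. As bounded sets one takes precompact subsets, and a fundamental domain of finite volume exists because the configuration space has finite volume.

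Next, as in Step~2, one builds the sequences of homomorphisms $(\alpha_{i,l}\colon G\to\Lambda)_{i\leq n,\,l\in\N}$ with $G\vcentcolon=\frgrp_2=\langle a_1,a_2\rangle$. Here one uses that $P_m$ surjects onto $\frgrp_2$ for suitable $m$ (e.g.\ $m=3$, where $P_3\cong\frgrp_2\times\Z$ up to the center, so one has a \emph{split} surjection trivial on the center — Kimura's choice), and realizes the two generators of the image by compactly supported area-preserving diffeomorphisms of $\disk$ that braid the marked points. For each $i$ one takes $n$ disjoint copies, supported in small disjoint sub-disks of $\disk$, each containing a configuration of $m$ points moved by two ``finger-push'' braiding moves (one for $a_1$, one for $a_2$) whose overlap region has a fixed small area $\kappa$; the finger-push diffeomorphisms are the identity outside a shrinking neighborhood and, as $l\to\infty$, the induced actions $\rho\circ\alpha_{i,l}$ converge, in the sense of Lemma~\ref{lem:transfer-properties}\ref{itm:limits}, to limit actions $\beta_i$ that move a whole coordinate sub-disk rigidly. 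The supports $Y_i$ of the $\beta_i$ are disjoint because they are pulled back from disjoint sub-disks of $\disk$.

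Then Step~3 goes through verbatim: each coupling $(\Gamma,\sigma,Y_i,\beta_i,G)$ splits as a disjoint union $\Gamma\times X_{i,0}\sqcup\Gamma\times X_{i,1}\sqcup\Gamma\times X_{i,2}$ according to the overlap and the two complementary pieces of the braiding supports; by Lemma~\ref{lem:transfer-properties}\ref{itm:morphisms} and \ref{itm:sums-and-products} the transfer is $\sum_{j=0}^2 \vol(X_{i,j})\cdot\varphi_j^*$, the actions on $X_{i,1},X_{i,2}$ factor through $\Z$ (trivial bounded cohomology in positive degrees), and on $X_{i,0}$ one gets $\kappa\cdot(q_\Gamma\circ s)^*$, which is surjective onto $\EHbr^*(\frgrp_2)$ because the splitting $s$ survives in the quotient by the center. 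So all $\tr_{\Gamma,\sigma}^{G,\beta_i}Y_i$ agree on $\EHbr^{>0}$ with a fixed map $\phi$ onto $\EHbr^*(\frgrp_2)$, and Lemma~\ref{lem:higher-degrees} applies: part~\ref{itm:direct-calculation} with the infinite-dimensionality of $\EHbr^3(\frgrp_2)$~\cite{soma} handles $d=3$, and part~\ref{itm:pairing-calculation} with $\Hb^2(\frgrp_2)$ infinite-dimensional~\cite{brooks} and the Kronecker isomorphism $\EHbr^2(\frgrp_2)\cong(\Hl_2(\frgrp_2))'$ handles $d\geq 2$ even. The one point requiring genuine care — and the only place where the disk case is not a formal translation of the manifold case — is verifying that $\rho$ is well-defined and measure-preserving: one must check that the lift of an area-preserving ambient isotopy to the configuration-space covering is well-defined independently of the isotopy (using that any two such isotopies differ by a loop in $\Diff_\vol(\disk,\partial\disk)$, whose monodromy on $\pi_1(\mathrm{Conf}_m)$ is by an inner automorphism coming from the center, which dies in $\Gamma$) and that it preserves the pushed-forward finite measure because the ambient diffeomorphisms are volume-preserving. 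With $\rho$ in hand, everything else is the argument of Theorem~\ref{thm:M-bounded}.
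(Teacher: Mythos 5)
There is a genuine gap, and it sits exactly at the point you dismiss as routine. Your coupling takes $X$ to be the universal cover of the \emph{open} configuration space $\mathrm{Conf}_m$ of the open disk, with precompact subsets as bounded sets. This coupling is left-cofinite but it is \emph{not geometrical}: any $\Gamma$--fundamental domain surjects onto the whole configuration space under the covering projection, so it cannot be precompact, because $\mathrm{Conf}_m$ of the open disk is non-compact (configuration points may collide or escape to $\partial\disk$). Geometricity is not a cosmetic hypothesis here: the transfer on ordinary cohomology, and with it the compatibility $\mathit{cmp}\circ\tr=\tr\circ\mathit{cmp}$ that is needed to know the transfer maps exact classes to exact classes, is only constructed for geometrical couplings, and hypothesis (1) of Lemma~\ref{lem:higher-degrees} explicitly requires one. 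Since the theorem is a statement about $\EHbr$, your coupling does not feed into the lemma. The paper resolves precisely this by replacing $C_3$ with its Fulton--MacPherson compactification $\overline{C}_3$ \cite{sinha}: a compact manifold with boundary with the same fundamental group $P_3$, to which the action of $\Diff_\vol(\disk,\partial\disk)$ extends via differentials at collision points -- which is also exactly where smoothness is used and why the theorem concerns $\Diff$ rather than $\Homeo$. So the genuinely non-formal step is making the coupling geometrical, not (as you claim) the well-definedness of $\rho$.

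Two further points are off as written. First, your well-definedness argument for $\rho$ is internally inconsistent: you set $\Gamma=P_m$ but then argue that the monodromy ambiguity is central and ``dies in $\Gamma$'' -- central elements of $P_m$ do not die in $P_m$. The clean argument, used in the paper, is Smale's theorem that $\Diff(\disk,\partial\disk)$ is contractible (simple connectivity suffices), so any two isotopies from the identity to $\lambda$ are homotopic and the lifted endpoint is independent of the choice; no quotient by the center is needed or taken. Second, Step 3 does not go through ``verbatim'': the support $Y_i$ decomposes not into three pieces but into $4^3$ summands according to where each of the three configuration coordinates lies relative to $N(x_{i,1})$, $N(x_{i,2})$, their intersection, or neither. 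Most summands carry homomorphisms factoring through abelian groups and die on $\EHbr^{>0}$; six summands survive, and their homomorphisms are only conjugate (inside the full braid group $B_3$) to a fixed $\varphi\colon a_j\mapsto b_j$, so one needs the inner-automorphism and finite-index arguments to identify them, yielding $\phi=6\kappa^3\cdot\varphi^*$ rather than the $\kappa\cdot(q_\Gamma\circ s)^*$ picture copied from Theorem~\ref{thm:M-bounded}. This bookkeeping is where the braid-group structure actually enters and cannot be waved through by analogy with the manifold case.
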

\begin{proof}
Let $\Gamma$ be the group of pure braids on three strands, and $\Lambda\vcentcolon=\Diff_\vol(\disk,\partial\disk)$ with the multiplication $\lambda_1\lambda_2=\lambda_2\circ\lambda_1$. We also fix some $n\in\N$ such that $2n\geq d$.

\textbf{1)}
Let $C_3=\bigl\{(x_1,x_2,x_3)\in(\disk)^3\bigm| x_i\neq x_j\enspace\forall i\neq j\bigr\}$ be the configuration space of three ordered points in the (closed) disk, and let $\overline{C}_3$ be its Fulton--MacPherson compactification, i.e., the closure of the image of the function
\begin{gather*}
\big(\id,\,(\tau_{i,j})_{1\leq i\neq j\leq 3},\,(\theta_{i,j,k})_{\{i,j,k\}=\{1,2,3\}}\big)\colon\, C_3\to\R^6\times (S^1)^6\times [0,\infty]^6,\\
\text{where}\qquad
\tau_{i,j}(x_1,x_2,x_3)=\frac{x_i-x_j}{\|x_i-x_j\|},\qquad\theta_{i,j,k}(x_1,x_2,x_3)=\frac{\|x_i-x_j\|}{\|x_j-x_k\|}.
\end{gather*}
By \cite{sinha}, the space $\overline{C}_3$ is a topological manifold with boundary and the inclusion $C_3\to\overline{C}_3$ induces an isomorphism of fundamental groups $\pi_1(\overline{C}_3)\cong\pi_1(C_3)\cong\Gamma$.
We let the measure space of the required coupling $(\Gamma,\sigma,X,\rho,\Gamma)$ be the universal covering of $\overline{C}_3$. It carries the measure induced from $\disk$ and the left action $\sigma\colon\Gamma\curvearrowright X$ of deck transformations. The bounded subsets are the precompact sets.

We define the right action $\rho\colon X\curvearrowleft\Lambda$ via monodromy:
By \cite{smale}, the space
$\Diff(\disk,\partial\disk)$ is contractible with regard to the $C^r$--topology.
For every $\lambda\in\Diff_\vol(\disk,\partial\disk)$ we pick a path $H$ from the identity to $\lambda$ and let $\widehat{H}$ be the induced isotopy on $C_3$. Via the differential, $\widehat{H}$ extends to an isotopy on $\overline{C}_3$, which in turn lifts to an isotopy $\widetilde{H}$ on $X$. We define $x.\lambda=\widetilde{H}_1(x)$.
Because $\Diff(\disk,\partial\disk)$ is simply connected, the definition does not depend on the choice of $H$.

\textbf{2)}
We pick $2n$--many subsets $\{N(x_{i,1/2})\}_{i\leq n}$ of $\disk$, each of them in the shape of a small disk with center $x_{i,1/2}$ and with the same small radius $\epsilon$. We do this in such a way that the unions $N(x_{i,1})\cup N(x_{i,2})$ are pairwise disjoint. Furthermore, we can arrange that the sets $N(x_{i,1})\setminus N(x_{i,2})$, $N(x_{i,2})\setminus N(x_{i,1})$ and $N(x_{i,1})\cap N(x_{i,2})$ all have the same positive volume $\kappa$, which is also independent of $i$.

Let $G\vcentcolon=\frgrp_2=\langle a_1,a_2\rangle$ and define the homomorphism sequences $\big(\alpha_{i,l}\colon G\to\Lambda\big)_{i\leq n,l\in\N}$ by the actions of the two generators: For $j\in\{1,2\}$ we let the map $\alpha_{i,l}(a_j)\colon X\to X$, be the identity map outside of $N(x_{i,j})$. Inside $N(x_{i,j})$ we let it be the end result of the ``twisting'' homotopy that fixes the boundary of the $\epsilon$--disk, rotates the $(\epsilon-\epsilon/l)$--disk centered in $x_{i,j}$ around once, and interpolates smoothly on the rest.
The sequence $(\rho\circ\alpha_{i,l})_{l\in\N}$ converges, in the sense of
Lemma~\ref{lem:transfer-properties}, property~\ref{itm:limits}, to the action $\beta_i$, where $\beta_i(a_j)$ is induced by rotating the whole subset $N(x_{i,j})$ and fixing its complement.

\textbf{3)}
For each $i\leq n$, the configuration space $C_3$ decomposes into a disjoint union of $4^3$--many pieces, according to which of the three configuration-coordinates lie in $N(x_{i,1})$ and which lie in $N(x_{i,2})$. This decomposition lifts to decompositions $\overline{C}_3=\bigsqcup_\pi X_\pi$ and $X\cong\bigsqcup_\pi \Gamma\times X_\pi$. If $\varphi_\pi\colon G\to\Gamma$ denotes the homomorphism that describes the action $\beta_i$ on the $\pi$--th summand, then $\varphi_\pi$ is non-trivial only on those summands for which at least two of the configuration-coordinates lie in $N(x_{i,1})\cup N(x_{i,2})$. This means that the supports $Y_i$ of the $\beta_i$ are disjoint for varying $i$.

To compute $\tr_{\Gamma,\sigma}^{G,\beta_i}Y_i=\sum_\pi\vol(X_\pi)\cdot {\varphi_\pi}^*$ we first recall that the pure braid group $G$ is isomorphic to $\Z\times\frgrp_2=\langle b_0\rangle\times\langle b_{1},b_{2}\rangle$. Here, the generator $b_0$ of the center rotates all three strands, and the other two generators rotate the third strand around the first, respectively the second strand.
On those summands $X_\pi$ where one set $N(x_{i,1/2})$ contains all three configuration-coordinates, the images $\varphi_\pi(a_1), \varphi_\pi(a_2)$ commute, such that $\varphi_\pi$ factors through the abelian group $\Z^2$, and hence ${\varphi_\pi}^*=0$ on $\EHbr^{>0}$. The same happens when one set $N(x_{i,1/2})$ contains only one or none of the coordinates, and also when one coordinate is neither contained in $N(x_{i,1})$ nor $N(x_{i,2})$. There remain six summands for the cases where each of the sets $N(x_{i,1})\setminus N(x_{i,2})$, $N(x_{i,2})\setminus N(x_{i,1})$, $N(x_{i,1})\cap N(x_{i,2})$ contain exactly one coordinate. Here, $\varphi_\pi(a_{1})$ and $\varphi_\pi(a_{2})$ rotate the strand corresponding to the coordinate that lies in $N(x_{i,1})\cap N(x_{i,2})$ around each of the other two strands.

Let $\varphi\colon G\to\Gamma$ be the homomorphism that sends $a_j$ to $b_j$.
For each of the above six summands the map $\varphi_\pi$ differs from $\varphi$ only by the ordering of the strands. Then, if $\iota\colon G\to B_3$ denotes the inclusion into the full braid group, $\iota\circ\varphi_\pi$ and $\iota\circ\varphi$ are related by an inner automorphism of $B_3$. Inner automorphisms induce the identity on bounded cohomology, and $\iota^*$ is an isomorphism because $\iota$ is the inclusion of a finite-index subgroup. Therefore, ${\varphi_\pi}^*={\varphi}^*$, and finally we get $\tr_{\Gamma,\sigma}^{G,\beta_i}Y_i=\phi\vcentcolon=6\kappa^3\cdot {\varphi}^*$ on $\EHbr^{>0}$.

Because the group homomorphism $\varphi$ has a left-inverse, $\phi$ is surjective.
As in Theorem~\ref{thm:M-bounded}, the result now follows from Lemma~\ref{lem:higher-degrees}, together with the fact that both $\EHbr^3(\frgrp_2)$ and $\mathit{Kr}\big(\EHbr^2(\frgrp_2)\big)$ are infinite-dimensional.
\end{proof}

\begin{remark}
To extend Theorems~\ref{thm:M-bounded} and \ref{thm:disk-bounded} to bounded cohomology in odd degrees $d\geq 5$, it would suffice to show that $\Hlr_3(\frgrp_2)\neq \{0\}$.
\end{remark}

Finally, the proof of Theorem~\ref{thm:M-bounded} can also be adapted to ordinary cohomology:

\begin{theorem}\label{thm:M-ordinary}
Let $M$ be a compact Riemannian manifold of dimension $\geq 5$ and assume that there exists a split epimorphism $r\colon\pi_1(M)\to\Z^2$ that is trivial on the center $Z(\pi_1(M))$.
Then $\Ho^d(\Homeo_{\vol,0}(M))\neq \{0\}$ for all $d\geq 0$.
\end{theorem}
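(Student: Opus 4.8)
The plan is to run the proof of Theorem~\ref{thm:M-bounded} with the free group $\frgrp_2$ replaced by $\Z^2$, realising the two generators of $\Z^2$ not by a pair of intersecting loops but by a single embedded $2$--torus -- which is what forces $\dim M\geq 5$ -- and then to feed the result into the ordinary-cohomology version of Lemma~\ref{lem:higher-degrees}, part~\ref{itm:pairing-calculation}. Set $\Gamma\vcentcolon=\pi_1(M)/Z(\pi_1(M))$ and $\Lambda\vcentcolon=\Homeo_{\vol,0}(M)$, and take for the coupling $(\Gamma,\sigma,X,\rho,\Lambda)$ the covering $p\colon X\to M$ associated to $Z(\pi_1(M))$ with its deck action $\sigma$ and monodromy action $\rho$, exactly as in the first step of the proof of Theorem~\ref{thm:M-bounded}; by Example~\ref{exm:main-example} it is a geometric left-cofinite coupling. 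Let $s\colon\Z^2\to\pi_1(M)$ be the given splitting of $r$. Since $r$ is trivial on the center it descends to a split epimorphism $\bar r\colon\Gamma\to\Z^2$, and $\bar s\vcentcolon= q_\Gamma\circ s$ is a splitting of $\bar r$; in particular $\bar s$ is injective.

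The geometric input, and the main obstacle, is the construction of commuting volume-preserving homeomorphisms with the prescribed monodromy limit. Fix the target degree $d$, put $n\vcentcolon=\max(d,1)$, and set $G\vcentcolon=\Z^2=\langle a_1,a_2\rangle$. Because $s(a_1)$ and $s(a_2)$ commute in $\pi_1(M)$ the homomorphism $s$ is realised by a map $T^2\to M$, and since $\dim M\geq 2\cdot2+1$ we may take this to be an embedding; as $2+2<\dim M$ we choose $n$ such embedded tori $T_1,\dots,T_n$ with pairwise disjoint tubular neighbourhoods $U_i$, rescaled to have a common volume $\kappa$. The normal bundle of $T_i$ need not be trivial, but every vector bundle over $T^2$ is flat, so we fix a flat connection on it; the horizontal lifts $W_1,W_2$ of the coordinate fields of $T^2$ then commute, and the time-one flow of each $W_j$ on $U_i$ is the identity (the flow is periodic of period one, wrapping once around a coordinate circle of $T^2$). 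Fixing for each $l$ a taper function $f_l$ equal to $1$ on the sub-disk-bundle of relative fibre radius $1-1/l$ and to $0$ on $\partial U_i$, the fields $f_lW_1$ and $f_lW_2$ still commute -- horizontal transport preserves the fibre radius, so $W_jf_l=0$ -- and are volume-preserving for the flat product volume on $U_i$, which we match to the ambient one by the relative Moser trick. Let $\alpha_{i,l}(a_j)\in\Lambda$ be the time-one flow of $f_lW_j$, extended by the identity; these define homomorphisms $\alpha_{i,l}\colon\Z^2\to\Lambda$ whose images commute over~$i$. As $l\to\infty$ the monodromies $\rho\circ\alpha_{i,l}$ converge, in the sense of Lemma~\ref{lem:transfer-properties}, property~\ref{itm:limits}, to the action $\beta_i$ which on $p^{-1}(U_i)$ is the deck action of $\bar s(\Z^2)<\Gamma$ and elsewhere is trivial: the two agree outside the preimage of the shell of relative fibre radius $>1-1/l$, whose volume tends to $0$, and condition~\eqref{eqn:bounded-convergence} holds because $\rho\circ\alpha_{i,l}(w)$ displaces points of a fixed bounded fundamental domain by the single ($l$--independent) deck element $\bar s(w)$.

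It remains to compute and conclude. The support of $\beta_i$ is $Y_i\vcentcolon= p^{-1}(U_i)$, and these are disjoint. Over $U_i$ the covering $p$ is classified by the injection $\pi_1(U_i)=\Z^2\xrightarrow{\bar s}\Gamma$, so each component of $Y_i$ is the universal cover $\widetilde{U_i}$ and $Y_i\cong\Gamma\times_{\bar s(\Z^2)}\widetilde{U_i}$ as a $\Gamma$--space, with $\beta_i$ acting through deck transformations of $\widetilde{U_i}$. For a $\bar s(\Z^2)$--fundamental domain $D\subset\widetilde{U_i}$ the set $\{[\one,y]\mid y\in D\}$ is a $\Gamma$--fundamental domain of volume $\vol(U_i)=\kappa$, on which $\beta_i(w)[\one,y]=[\bar s(w),y]$, so the associated retract satisfies $\chi([\one,y].w)=\bar s(w)$ independently of $y$; by Equation~\eqref{eqn:transfer-formula} we get $\tr_{\Gamma,\sigma}^{\Z^2,\beta_i}Y_i=\kappa\cdot\bar s^*$ on $\Ho^{>0}$, the same map $\phi$ for all $i$, and $\phi$ is surjective because $\bar r\circ\bar s=\id$. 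With all hypotheses of Lemma~\ref{lem:higher-degrees} verified and condition~\eqref{eqn:bounded-convergence} in force, its part~\ref{itm:pairing-calculation} gives, in ordinary cohomology,
\[
\dim\Ho^d(\Lambda)\ \geq\ \dim\mathrm{GrSym}^n_d\big(\mathit{Kr}\circ\phi(\Ho^*(\Gamma))\big)\ =\ \dim\mathrm{GrSym}^n_d\big(\Ho^*(\Z^2)\big),
\]
where the equality uses surjectivity of $\phi$ and that over $\R$ the Kronecker map is an isomorphism. Since $\Ho^*(\Z^2)$ has dimensions $1,2,1,0,0,\dots$ and its free graded-commutative algebra is a polynomial ring on the degree-$2$ class over the exterior algebra on the degree-$1$ classes (with a polynomial variable in degree~$0$), a product of $\lfloor d/2\rfloor$ copies of the degree-$2$ class, at most one degree-$1$ class, and degree-$0$ classes to make up $n$ factors is a non-zero element of $\mathrm{GrSym}^n_d(\Ho^*(\Z^2))$ as soon as $n\geq1+\lfloor d/2\rfloor$, which holds by the choice $n=\max(d,1)$. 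Hence $\Ho^d(\Lambda)\neq\{0\}$. The crux is that only an embedded torus -- available because $\dim M\geq5$ -- lets the degree-$2$ cohomology class of $\Z^2$, which has no analogue for $\frgrp_2$, survive into the image of the transfer, and this is exactly what allows the graded-symmetric powers to reach every degree.
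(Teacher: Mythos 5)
Your overall route is the same as the paper's (same coupling, embedded tori realizing the splitting $s$, finger-push/rotation approximants, and Lemma~\ref{lem:higher-degrees}, part~\ref{itm:pairing-calculation} applied to the classes of $\Ho^*(\Z^2)$ in degrees $0,1,2$), and your endgame -- the transfer identity $\phi=\kappa\cdot\bar{s}^*$ on $\Ho^{>0}$, surjectivity via $\bar{r}\circ\bar{s}=\id$, and the $\mathrm{GrSym}^n_d$ count -- matches the paper, which only records non-vanishing. The one point where you deviate, namely handling a possibly non-trivial normal bundle by choosing a flat connection, contains a genuine error. A flat connection is not a trivialization: its holonomies $A_1,A_2\in O(k)$ around the two coordinate circles of the torus are in general non-trivial (and your blanket claim that every vector bundle over $T^2$ is flat is false in rank $2$, though harmless here since the rank is $\dim M-2\geq 3$). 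Hence the time-one flow of the horizontal lift $W_j$ is \emph{not} the identity of $U_i$: it returns each fibre to itself but twisted by $A_j$, so your parenthetical ``the flow is periodic of period one'' fails, the limit of the monodromies $\rho\circ\alpha_{i,l}$ is not the deck action of $\bar{s}(\Z^2)$ on $p^{-1}(U_i)$ but the monodromy of the holonomy-twisted rotation, and the identity $\chi([\one,y].w)=\bar{s}(w)$ fails on a set of positive measure whenever some $A_j\neq\id$. This breaks the computation $\tr_{\Gamma,\sigma}^{\Z^2,\beta_i}Y_i=\kappa\cdot\bar{s}^*$, which is the heart of your step 3; and since trivial holonomy around both circles is exactly triviality of the bundle, your device only works in the case it was meant to avoid.

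The paper handles this point by identifying each tubular neighbourhood, by a volume-preserving homeomorphism, with the product $S^1\times S^1\times B_\epsilon$, so that a full rotation around either circle factor is the identity map of $M$ and the limit monodromy on $p^{-1}(N_i)$ is literally the deck action through $q_\Gamma\circ s$; to repair your argument you must either make that same product-neighbourhood arrangement, or genuinely compute the transfer of the twisted limit coupling (e.g.\ via Lemma~\ref{lem:transfer-properties}, property~\ref{itm:concatenation}), which you do not do. A smaller instance of the same over-simplification appears in your verification of \eqref{eqn:bounded-convergence}: points in the interpolation shell are only partially rotated, so they are not ``displaced by the single deck element $\bar{s}(w)$''; the correct argument, as in the paper's proof of Theorem~\ref{thm:M-ordinary}, is that $\bigcup_{l}\rho\circ\alpha_{i,l}(a_j)(F)$ is contained in $F$ together with the precompact track of the rotation homotopy.
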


\begin{proof}
As in Theorem~\ref{thm:M-bounded}, let $\Gamma\vcentcolon=\pi_1(M)/Z(\pi_1(M))$,
$\Lambda\vcentcolon=\Homeo_{\vol,0}(M)$, and let $n\in\N$ such that $2n\geq d$.

\textbf{1)} We use the same coupling $(\Gamma,\sigma,X,\rho,\Lambda)$ as in the proof of Theorem~\ref{thm:M-bounded}.

\textbf{2)}
Let $G=\Z^2$ with generators $a_1,a_2$ and let $s\colon G\to\pi_1(M)$ be a split of $r$. We construct a map $f\colon\mathbb{T}^2\to M$ from the $2$--torus such that $\pi_1(f)=s$: On the $1$--skeleton of the usual CW-structure, $S^1\vee S^1\subset\mathbb{T}^2$, the map is given by two loops in $M$ that represent the elements $s(a_1), s(a_2)$, and it is extended to $\mathbb{T}^2$ by the homotopy that proves that these elements commute. Because $\dim\mathbb{T}^2$ is below the middle dimension of $M$, we can use Whitney's Embedding Theorem \cite{whitney} to homotope $\sqcup f\colon\bigsqcup_{i=1}^n\mathbb{T}^2\to M$ to a differentiable embedding of $n$--many disjoint tori into $M$. We pick disjoint tubular neighborhoods $\{N_i\}_{i\leq n}$ of these tori, all with the same volume $\kappa$ and identify each of them them, by a volume-preserving homeomorphism, with $S^1\times S^1\times B_\epsilon$, the product of two standard circles and a Euclidean ball.

As before, we define the sequences of homomorphisms $\big(\alpha_{i,l}\colon G=\Z^2\to\Lambda\big)_{i\leq n,l\in\N}$ by the actions of the two generators: 
The map $\alpha_{i,l}(a_j)\colon X\to X$ is the identity outside of $N_i$. Inside $N_i\cong S^1\times S^1\times B_\epsilon$ it is the end result of the homotopy that fixes the boundary, pushes $S^1\times S^1\times B_{\epsilon-\epsilon/l}$ once around the $j$--th $S^1$--factor, and uses the norm of the $B_\epsilon$--coordinate as a parameter to interpolate on the rest.
The sequence $(\rho\circ\alpha_{i,l})_{l\in\N}$ converges, in the sense of Lemma~\ref{lem:transfer-properties}, property~\ref{itm:limits}. In the limit $\beta_i$ the action of $a_j$ is induced by rotating the whole subset $N_i$ around the $j$--th $S^1$--factor and fixing its complement.

This time, we also have to check that condition \eqref{eqn:bounded-convergence} is satisfied. It suffices to do this on the generators $a_j$. If $\Phi_{i,j}\colon [0,1]\times p^{-1}(N_i)\to p^{-1}(N_i)$ denotes the homotopy induced from rotating $N_i$ around the $j$--th $S^1$--factor, then for all $i,j$ and any fundamental domain $F$ the set
$\bigcup_{l\in\N}\rho\circ\alpha_{i,l}(a_j)(F)\subset F\cup\Phi_{i,j}\Big([0,1]\times\big(F\cap p^{-1}(N_i)\big)\Big)$ is precompact.

\textbf{3)}
For each $i$ the limit action $\beta_i$ has support
$Y_i\vcentcolon=p^{-1}(N_i)$. The coupling $(\Gamma,\sigma,Y_i,\beta_i,G)$ decomposes as $Y_i\cong\Gamma\times N_i$, and the $G$\nobreakdash--action is given by the group homomorphism $q_\Gamma\circ s$.
As before, it follows that $\tr_{\Gamma,\sigma}^{G,\beta_i}Y_i=\phi\vcentcolon=\kappa\cdot (q_\Gamma\circ s)^*$.
Because the composition $\phi\circ r^*=\kappa\cdot\id$ is an isomorphism, $\phi$ is surjective onto $\Ho^*(G)$ and
$\mathit{Kr}\circ\phi\big(\Ho^*(\Gamma)\big)\neq \{0\}$ in degree $\leq 2$.
The statement now follows from Lemma~\ref{lem:higher-degrees}, part~\ref{itm:pairing-calculation}.
\end{proof}

\section*{Acknowledgments}

This research was supported by the ERC Consolidator Grant No.\ 681207.

The author thanks Michał Marcinkowski for helpful discussions.

\begin{bibdiv}
\begin{biblist}

\bib{banyaga}{article}{
   author={Banyaga, Augustin},
   title={Formes-volume sur les vari\'{e}t\'{e}s \`a bord},
   language={French},
   journal={Enseign. Math. (2)},
   volume={20},
   date={1974},
   pages={127--131},
}

\bib{brandenbursky-marcinkowski}{article}{
   author={Brandenbursky, Michael},
   author={Marcinkowski, Michał},
   title={Bounded cohomology of transformation groups},
   note={arXiv:1902.11067 [math.GT]},
   status={preprint},
}

\bib{brooks}{article}{
   author={Brooks, Robert},
   title={Some remarks on bounded cohomology},
   conference={
      title={Riemann surfaces and related topics: Proceedings of the 1978
      Stony Brook Conference},
      address={State Univ. New York, Stony Brook, N.Y.},
      date={1978},
   },
   book={
      series={Ann. of Math. Stud.},
      volume={97},
      publisher={Princeton Univ. Press, Princeton, N.J.},
   },
   date={1981},
   pages={53--63},
}

\bib{gambaudo-ghys}{article}{
   author={Gambaudo, Jean-Marc},
   author={Ghys, \'{E}tienne},
   title={Commutators and diffeomorphisms of surfaces},
   journal={Ergodic Theory Dynam. Systems},
   volume={24},
   date={2004},
   number={5},
   pages={1591--1617},
}

\bib{hatcher}{book}{
   author={Hatcher, Allen},
   title={Algebraic topology},
   publisher={Cambridge University Press, Cambridge},
   date={2002},
   pages={xii+544},
   isbn={0-521-79160-X},
   isbn={0-521-79540-0},
}


\bib{kimura}{article}{
   author={Kimura, Mitsuaki},
   title={Gambaudo--Ghys construction on bounded cohomology},
   note={arXiv:2009.00124 [math.GT]},
   status={preprint},
}

\bib{loeh}{article}{
   author={L\"{o}h, Clara},
   title={A note on bounded-cohomological dimension of discrete groups},
   journal={J. Math. Soc. Japan},
   volume={69},
   date={2017},
   number={2},
   pages={715--734},
}

\bib{matsumoto-morita}{article}{
   author={Matsumoto, Shigenori},
   author={Morita, Shigeyuki},
   title={Bounded cohomology of certain groups of homeomorphisms},
   journal={Proc. Amer. Math. Soc.},
   volume={94},
   date={1985},
   number={3},
   pages={539--544},
}


\bib{monod-shalom}{article}{
   author={Monod, Nicolas},
   author={Shalom, Yehuda},
   title={Orbit equivalence rigidity and bounded cohomology},
   journal={Ann. of Math. (2)},
   volume={164},
   date={2006},
   number={3},
   pages={825--878},
}

\bib{savini}{article}{
  author={Savini, Alessio},
  title={Algebraic hull of maximal measurable cocycles of surface groups into Hermitian Lie groups},
  journal={Geom Dedicata},
  date={2020},
}

\bib{shalom}{article}{
   author={Shalom, Yehuda},
   title={Harmonic analysis, cohomology, and the large-scale geometry of
   amenable groups},
   journal={Acta Math.},
   volume={192},
   date={2004},
   number={2},
   pages={119--185},
}

\bib{sinha}{article}{
   author={Sinha, Dev P.},
   title={Manifold-theoretic compactifications of configuration spaces},
   journal={Selecta Math. (N.S.)},
   volume={10},
   date={2004},
   number={3},
   pages={391--428},
}

\bib{smale}{article}{
   author={Smale, Stephen},
   title={Diffeomorphisms of the $2$-sphere},
   journal={Proc. Amer. Math. Soc.},
   volume={10},
   date={1959},
   pages={621--626},
}

\bib{soma}{article}{
   author={Soma, Teruhiko},
   title={Bounded cohomology and topologically tame Kleinian groups},
   journal={Duke Math. J.},
   volume={88},
   date={1997},
   number={2},
   pages={357--370},
}

\bib{whitney}{article}{
   author={Whitney, Hassler},
   title={Differentiable manifolds},
   journal={Ann. of Math. (2)},
   volume={37},
   date={1936},
   number={3},
   pages={645--680},
}

\end{biblist}
\end{bibdiv} 

\end{document}